\numberwithin{equation}{section}
\newtheorem{thm}{Theorem}[section]
\newtheorem{lem}[thm]{Lemma}
\newtheorem{prop}[thm]{Proposition}
\newtheorem{cor}[thm]{Corollary}
\newtheorem{assumption}[thm]{Assumption}
\DeclareMathOperator{\id}{id}
\def\th@newremark{\th@remark\thm@headfont{\bfseries}}
\theoremstyle{newremark}
\newtheorem{rmk}[thm]{Remark}
\newcommand{\NN}{\mathbb{N}}
\newcommand{\TT}{\mathbb{T}}
\newcommand{\bB}{\mathcal{B}}
\newcommand{\cC}{\mathcal{C}}
\newcommand{\dD}{\mathcal{D}}
\newcommand{\lL}{\mathcal{L}}
\newcommand{\mM}{\mathcal{M}}
\newcommand{\xX}{\mathcal{X}}
\newcommand{\E}{\mathbf{E}}
\newcommand{\R}{\mathbf{R}}
\newcommand{\T}{\mathbf{T}}
\newcommand{\eps}{\varepsilon}
\renewcommand{\d}{\partial}
\renewcommand{\div}{\text{div}}
\newcommand{\diam}{\text{diam}}
\definecolor{darkgreen}{rgb}{0.1,0.7,0.1}
\definecolor{darkred}{rgb}{0.7,0.1,0.1}
\definecolor{darkblue}{rgb}{0,0,0.7}
\def\scal#1{\langle#1\rangle}
\def\bigscal#1{\big\langle#1\big\rangle}
\def\Bigscal#1{\Big\langle#1\Big\rangle}
\colorlet{symbols}{blue!90!black}
\colorlet{testcolor}{green!60!black}
\def\drawx{\draw[-,solid] (-3pt,-3pt) -- (3pt,3pt);\draw[-,solid] (-3pt,3pt) -- (3pt,-3pt);}
\tikzset{
	root/.style={circle,fill=testcolor,inner sep=0pt, minimum size=2mm},
	dot/.style={circle,fill=black,inner sep=0pt, minimum size=1mm},
	var/.style={circle,fill=black!10,draw=black,inner sep=0pt, minimum size=
	2mm},
	dotred/.style={circle,fill=black!50,inner sep=0pt, minimum size=2mm},
	generic/.style={semithick,shorten >=1pt,shorten <=1pt},
	dist/.style={ultra thick,draw=testcolor,shorten >=1pt,shorten <=1pt},
	testfcn/.style={ultra thick,testcolor,shorten >=1pt,shorten <=1pt,<-},
	testfcnx/.style={ultra thick,testcolor,shorten >=1pt,shorten <=1pt,<-,
		postaction={decorate,decoration={markings,mark=at position 0.6 with {\drawx}}}},
	kepsilon/.style={semithick,shorten >=1pt,shorten <=1pt,densely dashed,->},
	kprimex/.style={semithick,shorten >=1pt,shorten <=1pt,densely dashed,->,
		postaction={decorate,decoration={markings,mark=at position 0.4 with {\drawx}}}},
	kernel/.style={semithick,shorten >=1pt,shorten <=1pt,->},
	multx/.style={shorten >=1pt,shorten <=1pt,
		postaction={decorate,decoration={markings,mark=at position 0.5 with {\drawx}}}},
	kernelx/.style={semithick,shorten >=1pt,shorten <=1pt,->,
		postaction={decorate,decoration={markings,mark=at position 0.4 with {\drawx}}}},
	kernel1/.style={->,semithick,shorten >=1pt,shorten <=1pt,postaction={decorate,decoration={markings,mark=at position 0.45 with {\draw[-] (0,-0.1) -- (0,0.1);}}}},
	kernel2/.style={->,semithick,shorten >=1pt,shorten <=1pt,postaction={decorate,decoration={markings,mark=at position 0.45 with {\draw[-] (0.05,-0.1) -- (0.05,0.1);\draw[-] (-0.05,-0.1) -- (-0.05,0.1);}}}},
	kernelBig/.style={semithick,shorten >=1pt,shorten <=1pt,decorate, decoration={zigzag,amplitude=1.5pt,segment length = 3pt,pre length=2pt,post length=2pt}},
	gepsilon/.style={dotted,semithick,shorten >=1pt,shorten <=1pt},
	renorm/.style={shape=circle,fill=white,inner sep=1pt},
	labl/.style={shape=rectangle,fill=white,inner sep=1pt},
	xi/.style={circle,fill=symbols!10,draw=symbols,inner sep=0pt,minimum size=1.2mm},
	xix/.style={crosscircle,fill=symbols!10,draw=symbols,inner sep=0pt,minimum size=1.2mm},
	xib/.style={circle,fill=symbols!10,draw=symbols,inner sep=0pt,minimum size=1.6mm},
	xibx/.style={crosscircle,fill=symbols!10,draw=symbols,inner sep=0pt,minimum size=1.6mm},
	not/.style={circle,fill=symbols,draw=symbols,inner sep=0pt,minimum size=0.5mm},
	>=stealth,
	}
\def\DeclareSymbol#1#2#3{\expandafter\gdef\csname MH@symb@#1\endcsname{\tikz[baseline=#2,scale=0.15,draw=symbols]{#3}}\expandafter\gdef\csname MH@symb@#1s\endcsname{\scalebox{0.7}{\tikz[baseline=#2,scale=0.15,draw=symbols]{#3}}}}
\def\<#1>{\csname MH@symb@#1\endcsname}
\begin{document}

\title{Periodic homogenisation for $P(\phi)_2$}

\author{
Yilin Chen\\
\textit{Peking University}\\
\and
Weijun Xu\\
\textit{Peking University}
}

\maketitle

\abstract{We consider the periodic homogenisation problem for dynamical $P(\phi)_2$, a toy model that combines both renormalisation in singular stochastic PDEs and homogenisation. Our result shows that the two limiting procedures commute in this case.}

\tableofcontents

\section{Introduction}

The aim of this article is to establish the well-posedness and then the $\eps \rightarrow 0$ limit of the solution $\phi_\eps$ to the equation
\begin{equation} \label{eq:main}
	\d_t \phi_\eps = \lL_\eps \phi_\eps - \phi_{\eps}^{\diamond (2n-1)} + \xi\;
\end{equation}
on a nice bounded domain $\dD \subset \R^2$ with Dirichlet boundary condition. Here, $\xi$ is the two dimensional space-time white noise on $\dD$, $\lL_\eps = \div \big( a(\cdot/\eps) \nabla \big)$ is a divergence-form elliptic operator with periodic and bounded elliptic matrix $a$ and oscillation parameter $\eps$, and $\phi_\eps^{\diamond (2n-1)}$ is the Wick product taken with respect to Gaussian structure induced by the linear part of the equation. Throughout, we make the following assumption on $a$. 

\begin{assumption}\label{as:a}
    The coefficient matrix $a: \R^2 \rightarrow \R^{2 \times 2}$ is $1$-periodic, uniformly elliptic, symmetric and H\"{o}lder continuous. 
\end{assumption}

With these assumptions on $a$, the homogenised operator $\lL_0$ of $\lL_\eps$ is given by
\begin{equation} \label{eq:homogenised_operator}
    \lL_0 = \div (\widehat{a} \nabla)\;,
\end{equation}
where $\widehat{a}$ is a constant $2 \times 2$ positive-definite symmetric matrix given by
\begin{equation} \label{eq:homogenised_coefficient}
    \widehat{a} = \int_{\TT^2} a \big( \id + \nabla \chi, (y) \big) {\rm d}y\;,
\end{equation}
where $\chi = (\chi_1, \chi_2)$ solves the elliptic PDE
\begin{equation*}
    \div \big( a (e_k + \nabla \chi_k) \big) = 0\;, \quad \int_{\T^2} \chi_k {\rm d}y = 0
\end{equation*}
on the torus $\T^2$ for $k=1,2$. 

Although the homogenisation of $\lL_\eps$ to $\lL_0$ is standard, the motivation to study \eqref{eq:main} is to combine two singular limiting procedures, homogenisation and renormalisation, in one single problem. Indeed, consider a regularised version of \eqref{eq:main} given by
\begin{equation} \label{eq:regular_osci}
	\d_t \phi_\eps^{(\delta)} = \lL_\eps \phi_\eps^{(\delta)} - H_{2n-1} \big( \phi_\eps^{(\delta)}; \, C_{\eps}^{(\delta)} \big) + \xi^{(\delta)}\;,
\end{equation}
where $\xi^{(\delta)}$ is a smooth approximation to the space-time white noise $\xi$ at scale $\delta$, and $H_{2n-1}(\cdot \, ; \, C_{\eps}^{(\delta)})$ is the $(2n-1)$-th Hermite polynomial with a proper choice of variance $C_{\eps}^{(\delta)}$ to be specified later. The two natural ways of taking limits are: $\eps \rightarrow 0$ first, and then $\delta \rightarrow 0$, and the other way around. The first way of taking limits is now standard. Let $\psi_{\eps}^{(\delta)}$ be the stationary-in-time solution to the linear equation
\begin{equation*}
    \d_t \psi_\eps^{(\delta)} = \lL_\eps \psi_\eps^{(\delta)} + \xi^{(\delta)}\;.
\end{equation*}
Choose $C_{\eps}^{(\delta)}(x) = \E |\psi_\eps^{(\delta)}(t,x)|^2$ (it is independent of $t$ since $\psi_\eps^{(\delta)}$ is stationary-in-time), then the remainder $u_\eps^{(\delta)} = \phi_\eps^{(\delta)} - \psi_\eps^{(\delta)}$ satisfies the equation
\begin{equation*}
    \d_t u_\eps^{(\delta)} = \lL_\eps u_\eps^{(\delta)} - \sum_{k=0}^{2n-1} \begin{pmatrix} 2 n - 1 \\ k \end{pmatrix} (\psi_\eps^{(\delta)})^{\diamond (2n-1-k)} (u_\eps^{(\delta)})^{k}\;,
\end{equation*}
where $(\psi_\eps^{(\delta)})^{\diamond (2n-1-k)}$ is the $(2n-1-k)$-th Wick product of the Gaussian $\psi_\eps^{(\delta)}$. For fixed $\delta>0$, since $\psi_\eps^{(\delta)}$ is smooth, taking the $\eps \rightarrow 0$ limit are standard periodic homogenisation for both $\psi_\eps^{(\delta)}$ and $u_\eps^{(\delta)}$. Indeed, for every $\delta > 0$, we have
\begin{equation*}
    (\psi_\eps^{(\delta)})^{\diamond (2n-1-k)} \rightarrow (\psi_0^{(\delta)})^{\diamond (2n-1-k)}
\end{equation*}
in $\cC([0,1]; \cC^{1-\kappa}(\dD))$ as $\eps \rightarrow 0$, where $\psi_0^{(\delta)}$ is the stationary solution to the homogenised equation
\begin{equation*}
    \d_t \psi_0^{(\delta)} = \lL_0 \psi_0^{(\delta)} + \xi^{(\delta)}\;,
\end{equation*}
where $\lL_0 = \div (\widehat{a} \nabla)$ is the homogenised operator in \eqref{eq:homogenised_operator} with $\widehat{a}$ being the homogenised constant elliptic matrix given in \eqref{eq:homogenised_coefficient}. Then as soon as the initial data $\phi_\eps(0)$ to \eqref{eq:main} converges to some $\phi_0(0)$ in $\cC^{-\kappa}(\dD)$, it follows that $u_\eps^{(\delta)} \rightarrow u_0^{(\delta)}$, where $u^{(\delta)}_{0}$ satisfies the homogenised equation
\begin{equation*}
    \d_t u_0^{(\delta)} = \lL_0 u_0^{(\delta)} - \sum_{k=0}^{2n-1} \begin{pmatrix} 2 n - 1 \\ k \end{pmatrix} (\psi_0^{(\delta)})^{\diamond (2n-1-k)} (u_0^{(\delta)})^{k}
\end{equation*}
with initial data $u^{(\delta)}_0(0) = \phi_0(0) - \psi_0(0)$. Now, it is standard da Prato-Debussche method (\cite{DaPratoDebussche2003}) to show that $\psi_0^{(\delta)} \rightarrow \psi_0$ in $\cC([0,1]; \cC^{-\kappa}(\dD))$ and $u_0^{(\delta)} \rightarrow u_0$ in $\cC^{2-\kappa}(\dD)$ for fixed time (with a weight at $t=0$), where $\psi_0$ is the stationary solution to
\begin{equation} \label{eq:psi_0}
    \d_t \psi_0 = \lL_0 \psi_0 + \xi\;,
\end{equation}
and $u_0$ solves the equation
\begin{equation} \label{eq:u_0}
    \d_t u_0 = \lL_0 u_0 - \sum_{k=0}^{2n-1} \begin{pmatrix} 2 n - 1 \\ k \end{pmatrix} \psi_0^{\diamond (2n-1-k)} u_0^{k}\;, \quad u(0) = \phi_0(0) - \psi_0(0)\;.
\end{equation}
Let us now consider the other limiting procedure. For fixed $\eps>0$, one can still send $\delta \rightarrow 0$ to obtain the convergence
\begin{equation*}
    \psi_\eps^{(\delta)} \rightarrow \psi_\eps\;, \qquad u_\eps^{(\delta)} \rightarrow u_\eps\;
\end{equation*}
in proper spaces respectively, where $\psi_\eps$ and $u_\eps$ satisfies the same equations for $\psi_\eps^{(\delta)}$ and $u_\eps^{(\delta)}$ with $\xi^{(\delta)}$ replaced by $\xi$ and $(\psi_\eps^{(\delta)})^{\diamond (2n-1-k)}$ replaced by $\psi_\eps^{\diamond (2n-1-k)}$. Note that at this stage, the estimates here are allowed to depend on $\eps$. The natural question is what is the behaviour of $\psi_\eps$ and $u_\eps$ as $\eps \rightarrow 0$, which is a periodic homogenisation problem for a singular stochastic PDE. Our main theorem is the following. 

\begin{thm} \label{th:main}
    Let $\dD \subset \R^2$ be a connected, open, bounded domain with smooth boundary. Let $\psi_\eps$ be the stationary-in-time solution to
    \begin{equation*}
        \d_t \psi_\eps = \lL_\eps \psi_\eps + \xi\;, 
    \end{equation*}
    and $u_\eps$ be the solution to
    \begin{equation*}
    \d_t u_\eps = \lL_\eps u_\eps - \sum_{k=0}^{2n-1} \scriptsize{\begin{pmatrix} 2 n - 1 \\ k \end{pmatrix}} (\psi_\eps)^{\diamond (2n-1-k)} u_\eps^{k}\;, \quad u_\eps(0) = \phi_\eps(0) - \psi_\eps(0)\;.
    \end{equation*}
    Both $\psi_\eps$ and $u_\eps$ satisfy Dirichlet boundary condition on $\d \dD$. Then, $\psi_\eps \rightarrow \psi$ in probability in $\cC([0,T]; \cC^{-\kappa}(\dD))$, where $\psi$ is the stationary solution to \eqref{eq:psi_0} with Dirichlet boundary condition. Let $\beta, \kappa>0$ be sufficiently small (depending on $n$). If $\phi_\eps(0) \rightarrow \phi(0)$ in $\cC^{-\beta}(\dD)$, then for every $p, T > 0$, we have
    \begin{equation*}
        \E \Big( \sup_{t \in [0,T]} t^{\frac{1+\beta}{2}} \|u_\eps(t) - u_0(t)\|_{\cC^{1-\kappa}(\dD)} \Big)^p \rightarrow 0\;,
    \end{equation*}
    where $u_0$ solve the equations \eqref{eq:u_0} with initial data $u_0(0) = \phi_0(0) - \psi_0(0)$ and Dirichlet boundary condition. The process $\phi_0 = \psi_0 + u_0$ is the solution the standard dynamical $\phi^{2n}_2$ model
    \begin{equation} \label{eq:phi_limit}
        \d_t \phi_0 = \lL_0 \phi_0 - \phi_0^{\diamond (2n-1)} + \xi
    \end{equation}
    with initial data $\phi_0(0)$ and Dirichlet boundary condition. 
\end{thm}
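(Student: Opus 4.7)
The proof follows the standard da Prato--Debussche philosophy already outlined above, but now uniformly in $\eps$. The output is a continuity statement for the solution map $\Psi: (\psi^{\diamond 1}, \ldots, \psi^{\diamond(2n-1)}, u(0)) \mapsto u$ applied to the oscillating stochastic data. My plan is therefore to (i) prove the convergence of the Gaussian data $\psi_\eps \to \psi_0$ and, more importantly, of the renormalised Wick powers $\psi_\eps^{\diamond k} \to \psi_0^{\diamond k}$ in $\cC([0,T];\cC^{-\kappa}(\dD))$, and (ii) show that the nonlinear solution map for the remainder equation is continuous uniformly in $\eps$, using $\eps$-uniform Schauder estimates for the parabolic semigroup generated by $\lL_\eps$.

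\textbf{Convergence of the stochastic data.} Let $G_\eps$ be the Dirichlet heat kernel of $\d_t - \lL_\eps$ on $\dD$. Then
\begin{equation*}
    \psi_\eps(t,x) = \int_{-\infty}^t \int_\dD G_\eps(t-s,x,y)\,\xi(ds,dy),
\end{equation*}
and the corresponding formula holds for $\psi_0$ with $G_0$. Periodic homogenisation of parabolic operators (e.g.\ of Avellaneda--Lin / Geng--Shen type, adapted to the Dirichlet problem on $\dD$) gives quantitative $L^2$ and off-diagonal convergence rates $\|G_\eps(t,\cdot,y) - G_0(t,\cdot,y)\|_{L^2} \lesssim \eps^\alpha t^{-\gamma}$ for some $\alpha,\gamma>0$, which immediately yields
\begin{equation*}
    \E\bigl[(\psi_\eps(\varphi) - \psi_0(\varphi))^2\bigr] \to 0
\end{equation*}
for smooth test functions $\varphi$, with quantitative rates in terms of the support scale of $\varphi$. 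Since $\psi_\eps(\varphi) - \psi_0(\varphi)$ lies in the first Wiener chaos, Gaussian hypercontractivity upgrades this to $L^p$ for every $p$, and a Kolmogorov-type argument (testing against heat kernels or wavelets) then yields convergence in $\cC([0,T];\cC^{-\kappa}(\dD))$. For the Wick powers $\psi_\eps^{\diamond k}$, I would compute the second-moment diagonal covariance differences via the Wick formula, which reduces everything to controlling
\begin{equation*}
    \bigl|\E[\psi_\eps(t,x)\psi_\eps(t,y)]^k + \E[\psi_0(t,x)\psi_0(t,y)]^k - 2\E[\psi_\eps(t,x)\psi_0(t,y)]^k\bigr|
\end{equation*}
and its variants. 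Here the key point is that the logarithmic diagonal divergences of the two covariances cancel in the difference because $\lL_\eps$ and $\lL_0$ have the same principal-order heat kernel singularity on the diagonal at small scales; off the diagonal, the heat-kernel homogenisation rate controls the difference. Hypercontractivity then closes the argument in negative Hölder spaces.

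\textbf{The remainder equation.} Given the convergence of all Wick powers, the equation for $u_\eps$ is a polynomial perturbation with lower-regularity data. I would mild-formulate it against the semigroup $e^{t\lL_\eps}$ and run a fixed-point argument in a weighted space such as $\cC_T^{\beta} := \{u : \sup_{t} t^{(1+\beta)/2} \|u(t)\|_{\cC^{1-\kappa}} < \infty\}$, with an additional blow-up bound to absorb rough initial data in $\cC^{-\beta}$. The two ingredients that must be made $\eps$-uniform are: (a) Schauder-type estimates for $e^{t\lL_\eps}$ acting between $\cC^\alpha$ spaces on the Dirichlet domain $\dD$, and (b) multiplication estimates $\|f\cdot g\|_{\cC^{-\kappa}} \lesssim \|f\|_{\cC^{-\kappa}}\|g\|_{\cC^{2-\kappa}}$, which are $\eps$-independent. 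Given these, contraction on small time intervals plus an a priori moment bound (using the good sign of the top-order term $-u_\eps^{2n-1}$ in a coercivity argument) gives global-in-time existence and, by the continuity of the map $\Psi$ in the stochastic data, the claimed convergence $u_\eps \to u_0$ in the required weighted norm.

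\textbf{Main obstacle.} The heart of the argument is the uniform-in-$\eps$ control of stochastic objects in $\cC^{-\kappa}$. The technical bottleneck is the interplay between the heat-kernel homogenisation error (which decays only polynomially in $\eps$) and the logarithmic on-diagonal singularity of the Gaussian covariances that produces the Wick renormalisation. Showing that the diagonal counterterms $C_\eps$ and $C_0$ differ by an $o(1)$ quantity as $\eps \to 0$, and that this difference remains uniform in space up to the Dirichlet boundary of $\dD$, is where I would expect the most delicate heat-kernel analysis. Everything else---the fixed-point analysis, the $\eps$-uniform Schauder estimates, and the passage to the limit---follows the now-classical deterministic homogenisation toolbox combined with the da Prato--Debussche scheme.
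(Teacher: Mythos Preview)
Your treatment of the stochastic data is essentially the paper's: bound the second moment of $\langle \psi_\eps^{\diamond m} - \psi_0^{\diamond m}, \varphi\rangle$ via the Wick formula and $L^p(\dD^2)$ estimates on covariance-kernel differences $\rho_{\eps,\eps} - \rho_{\eps,0}$, which in turn follow from the $L^2$ operator bound $\|e^{t\lL_\eps} - e^{t\lL_0}\|_{L^2 \to L^2} \lesssim \eps (t+\eps^2)^{-1/2} e^{-ct}$. Your ``main obstacle'' is misidentified, however: the diagonal counterterms $C_\eps(x)$ and $C_0(x)$ are both infinite, and the paper never compares them (indeed the Remark after the theorem shows that different natural choices of counterterm diverge from each other as $\eps \to 0$). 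The argument proceeds entirely through covariance-kernel differences in $L^p(\dD^2)$, where the logarithmic singularity is integrable; no cancellation of pointwise on-diagonal divergences is invoked or needed.

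For the remainder there is a structural gap. You frame the convergence as ``continuity of the map $\Psi$ in the stochastic data'', but $\Psi$ itself depends on $\eps$ through the semigroup $e^{t\lL_\eps}$ in the mild formulation. Uniform-in-$\eps$ Schauder estimates give you uniform bounds, not convergence of the operator family. To close a direct stability argument you would also need quantitative control of $\|(e^{(t-r)\lL_\eps} - e^{(t-r)\lL_0}) F_\eps(r)\|_{\cC^{1-\kappa}}$ inside the Duhamel integral, which is delicate because the available operator-difference bounds in H\"older spaces (see~\eqref{eq:semi_group_derivative}) barely integrate against the time-singularities carried by $F_\eps$. The paper sidesteps this: after proving uniform-in-$\eps$ $W^{1,\infty}$ bounds on $Y_\eps = u_\eps - e^{t\lL_\eps} u_\eps(0)$, it uses compactness (Arzel\`a--Ascoli via uniform space and time-derivative bounds) to extract almost-sure subsequential limits in $L^\infty_t L^p_x$, identifies the limit by testing the mild formulation against smooth $F$ and invoking standard parabolic homogenisation for the \emph{adjoint} semigroup acting on $F$, and finally upgrades to $L^p_\omega L^\infty_t W^{s,p}_x$ by interpolation against the uniform $W^{1,\infty}$ bounds. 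Operator convergence is thus only ever applied to smooth compactly supported data, where it is elementary.
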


Near the completion of this article, we learned the forthcoming work by Hairer and Singh (\cite{st_homo_phi42}), who studied a very similar question as in this article, but in a more general formulation allowing space-time periodic coefficients and joint limits $(\eps,\delta) \rightarrow (0,0)$ in arbitrary ways. 

Although these seem to be the first works combining homogenisation and renormalisations in singular nonlinear stochastic PDEs, there are homogenisation problems with singular right hand side (below deterministic regularity threshold) considered before. Hairer and Kelley (\cite{SPDE_multiscale}) studied a periodic homogenisation problem with general elliptic operator and space-time white noise in dimension $1$. The interesting phenomena is that the solution converges to a linear stochastic heat equation with the expected homogenised coefficient, but with a different constant multiplying the noise in the limit. They further showed that the multiplicative constant in front of the noise is $1$ if and only if the oscillatory operator is of divergence form, which is the situation considered in this article. In \cite{add_functional_rp},  Deuschel, Orenshtein and Perkowski studied an interesting situation in which the second order iterated integral in rough paths and homogenisation do not commute. In \cite{Stoch_homo_Gaussian}, Chiarini and Ruszel studied stochastic homogenisation problem with independent singular Gaussian random fields on the right hand side, and obtained convergence to the (expected) homogenised stochastic heat equation, but with a slower rate than the standard situation. More recently, Kremp and Perkowski (\cite{homo_sde_levy}) studied a periodic homogenisation problem for singular L\'evy SDEs beyond Young regime, and proved central limit theorems for both the diffusion case and $\alpha$-stable case. These works lead to natural questions on how the statement in the current context can be extended if the oscillatory operator is not of divergence form, or if the coefficient field $a$ is random, or if the homogenisation and renormalisation procedure do not commute, etc.

\begin{rmk}
    The stochastic PDE \eqref{eq:main} and its regularised version \eqref{eq:regular_osci} are not translation invariant since $a$ depends on $x$. The choice of renormalisations $\{C_\eps^{(\delta)}(\cdot)\}_{\delta>0}$ then also depends on the spatial location $x$. This in principle gives an infinite degree of freedom in the renormalisation procedure, in contrast to the finite dimensionality in translation invariant situations. 
    
    In the recent article \cite{SPDE_variable_nontrans}, Singh gave a natural choice of a finite dimensional family of solutions in a large class of non-translation invariant situations with explicit counter-terms, including space-time dependent coefficient. The way to achieve this is to freeze the coefficient locally in space-time, and the problem becomes translation invariant with the frozen coefficient. This works well when the coefficient $a$ has sufficient regularity, which is not the case for homogenisation if one wants uniformity in the oscillatory parameter in the coefficient. 
    
    Let us consider the specific example \eqref{eq:regular_osci} with $n=2$ and on the torus $\dD = \T^2$ (so $\eps = \eps_N = \frac{1}{N}$ to keep the problem on the torus). We have
    \begin{equation*}
        \d_t \phi_\eps^{(\delta)} = \lL_\eps \phi_\eps^{(\delta)} - (\phi_\eps^{(\delta)})^3 + 3 C_{\eps}^{(\delta)} \phi_\eps^{(\delta)} + \xi^{(\delta)}\;, 
    \end{equation*}
    where $\xi^{(\delta)}$ is the specific regularisation of $\xi$ by heat kernel (see \cite{SPDE_variable_nontrans}) such that
    \begin{equation*}
        \xi^{(\delta)}(t,x) = \int_{\R} \rho^{(\delta)}(t-s) \int_{\dD} G_\eps(t-s+\delta^2, x, y) \xi(s,y) {\rm d}y {\rm d}s\;, 
    \end{equation*}
    where $\rho$ is an even positive $\cC_{c}^{\infty}(\R)$ function integrating to $1$, and $\rho^{(\delta)}(s) = \delta^{-2} \rho(s/\delta^2)$. Here, the choice of regularisation also makes $\xi^{(\delta)}$ depends on $\eps$, but we omit it in notation for simplicity. 

    The counter-term $\widetilde{C}_{\eps}^{(\delta)}(x)$ given in \cite{SPDE_variable_nontrans} to this equation, as a function in $x \in \T^2$, is proportional to $\det(a(x/\eps))^{-\frac{1}{2}}$. The counter-term $C_{\eps}^{(\delta)}(x)$ considered in this article is the variance of the solution to the stationary linear equation. One implication of Singh's result is that for every fixed $\eps>0$, $C_{\eps}^{(\delta)} - \widetilde{C}_{\eps}^{(\delta)}$ converges uniformly on $\T^2$ to a continuous function as $\delta \rightarrow 0$. 
    
    On the other hand, by explicit computation, one can show there exists a function $\lambda(\delta) \sim |\log \delta|$ such that for every $\delta>0$, one has
    \begin{equation*}
        \int_{\T^2} C_{\eps}^{(\delta)}(x) {\rm d}x \rightarrow \frac{\lambda(\delta)}{\sqrt{\det{\widehat{a}}}} \;, \qquad \int_{\T^2} \widetilde{C}_{\eps}^{(\delta)}(x) {\rm d}x \rightarrow \int_{\T^2} \frac{\lambda(\delta)}{\sqrt{\det \big(a(y) \big)}} {\rm d}y
    \end{equation*}
    as $\eps \rightarrow 0$. The two coefficients multiplying $\lambda(\delta)$ are not equal for generic non-constant $a$, hence the two renormalisations diverge far apart (even in the space of distributions) if one wants uniformity in $\eps$. In \cite{st_homo_phi42}, another renormalisation constant (in addition to the renormalisation function $\widetilde{C}_{\eps}^{(\delta)}$) was added to ensure convergence to the limit. 
\end{rmk}

\subsection*{Organisation of the article}

The rest of the article is organised as follows. In Section~\ref{sec:linear}, we prove convergence of the Wick power $\psi_\eps^{\diamond m}$ to their corresponding homogenised ones as $\eps \rightarrow 0$. In Section~\ref{sec:remainder}, we analyse the remainder equation $u_\eps$, giving its existence and apriori bounds. In Section~\ref{sec:convergence}, we show that $u_\eps$ homogenises to the expected limit $u_0$, and hence also $\phi_\eps$ to its expected limit $\phi_0$. In the appendices, we give some estimates on the operator $\lL_\eps$ and $\lL_0$ and their Green functions, as well as some useful functional inequalities.

\subsection*{Notations}

Let $G_\eps (t;x,y)$ be the Green's function of the parabolic operator $\d_t - \lL_\eps$ in the sense that
\begin{equation*}
    (e^{t \lL_\eps} f)(x) = \int_{\dD} G_\eps(t;x,y) f(y) {\rm d}y\;.
\end{equation*}
Throughout, we fix $\kappa > 0$ to be a sufficiently small number ($\kappa < \frac{1}{100 n}$ would suffice). For $\alpha >0$, let
\begin{equation*}
    \bB^\alpha:= \big\{ \varphi \in \cC_c^{\alpha}(\dD):  \|\varphi\|_{\cC^\alpha(\dD)} \leq 1 \big\}\;.
\end{equation*}
and
\begin{equation*}
    \begin{split}
        \varphi^{\lambda}_{x}(y) \triangleq \frac{1}{\lambda^2} \varphi \Big(\frac{y-x}{\lambda} \Big)
    \end{split}
\end{equation*}
For $\alpha<0$, the $\cC^\alpha$-norm is defined by
\begin{equation*}
    \|f\|_{\cC^\alpha(\dD)} := \sup_{x \in \dD} \sup_{\lambda} \sup_{\|\varphi\|_{\bB^\alpha} \leq 1} \lambda^{-\alpha} |\scal{f, \varphi_x^\lambda}|\;,
\end{equation*}
where the supremum in $\lambda$ is taken over the range $\lambda \in (0,1)$ with the additional constraint $\text{supp}(\varphi_x^{\lambda}) \subset \dD$.

\subsection*{Acknowledgement}

W. Xu was supported by National Science Foundation China via the standard project grant (no. 8200906145) and the Ministry of Science and Technology via the National Key R\&D Program of China (no. 2020YFA0712900).

\section{The stationary linear part}
\label{sec:linear}

Recall that the stationary linear solution $\psi_\eps$ has the form
\begin{equation} \label{eq:linear_stationary}
	\psi_{\eps}(t) :=  \int_{-\infty}^{t} e^{(t-r) \lL_\eps} \xi(r) {\rm d}r\;
\end{equation}
and the stationary solution $\psi_0$ to the homogenised linear equation is given by
\begin{equation} \label{eq:linear_stationary_homogenised}
	\psi_0(t) := \int_{-\infty}^{t} e^{(t-r) \lL_0} \xi(r) {\rm d}r\;.
\end{equation}
The main result of this section is the following theorem regarding the convergence of Wick powers of the stationary linear solutions. 

\begin{thm} \label{thm:linear_stationary}
For the stationary linear solution $\psi_\eps$ given in \eqref{eq:linear_stationary}, every integer $m$, and every $\delta \in (0,\frac{1}{4})$, we have
\begin{equation} \label{eq:linear_wick_bound}
    \E \big| \bigscal{ \big(\psi_{\eps}^{\diamond m}(t) - \psi_{\eps}^{\diamond m}(s) \big) - \big(\psi_0^{\diamond m}(t) - \psi_{0}^{\diamond m}(s) \big), \varphi_x^\lambda} \big|^{2} \lesssim \big( |t-s| \wedge 1 \big)^{\delta} \eps^{\delta} \lambda^{-8 \delta}\;,
\end{equation}
uniformly over $\eps \in [0,1]$, $\lambda \in [0,1]$, $s,t \in \R^{+}$, $x \in \dD$ and $\varphi \in L^{\infty}(\dD)$ such that $\|\varphi\|_{L^\infty (\dD)} \leq 1$ and $supp(\varphi) \subset \dD$. As a consequence, for every integer $m$ and every $p \geq 1$, we have
    \begin{equation} \label{eq:linear_wick_convergence}
    \begin{split}
                \E \sup_{t \in [0,T]} \|\psi_\eps^{\diamond m}(t,\cdot) - \psi_0^{\diamond m}(t,\cdot)\|_{\cC^{-\kappa}(\dD)}^{p} \rightarrow 0\\
        \sup_{t \in \R} \E  \|\psi_\eps^{\diamond m}(t,\cdot) - \psi_0^{\diamond m}(t,\cdot)\|^p_{\cC^{-\kappa}(\dD)} \rightarrow 0
    \end{split}
    \end{equation}
    as $\eps \rightarrow 0$. 
\end{thm}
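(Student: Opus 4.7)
The plan is to reduce \eqref{eq:linear_wick_bound} to second-moment estimates on two-point covariance kernels, and then to leverage the quantitative Green-function estimates for $\d_t - \lL_\eps$ collected in the appendix. For every fixed $s,t \geq 0$ the random variable $\scal{\Phi_\eps(t,s), \varphi_x^\lambda}$, with
\begin{equation*}
	\Phi_\eps(t,s) := \bigl(\psi_\eps^{\diamond m}(t) - \psi_\eps^{\diamond m}(s)\bigr) - \bigl(\psi_0^{\diamond m}(t) - \psi_0^{\diamond m}(s)\bigr),
\end{equation*}
lies in the $m$-th inhomogeneous Wiener chaos of $\xi$, so by Nelson's hypercontractivity all $L^p$ moments are equivalent to the $L^2$ moment up to a $(p,m)$-dependent constant. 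It therefore suffices to prove \eqref{eq:linear_wick_bound}, after which \eqref{eq:linear_wick_convergence} follows by a standard Kolmogorov-type criterion adapted to negative H\"older spaces.

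For the $L^2$ bound I use the Wick isometry: for jointly centred Gaussian fields $X,Y$ one has $\E[X^{\diamond m}(z_1) Y^{\diamond m}(z_2)] = m!\,\bigl(\E[X(z_1)Y(z_2)]\bigr)^m$. Writing $K_{AB}(z_1,z_2) := \E[A(z_1) B(z_2)]$ for $A,B \in \{\psi_\eps,\psi_0\}$ and expanding $\E\bigl|\scal{\Phi_\eps(t,s), \varphi_x^\lambda}\bigr|^2$ via this isometry produces an explicit signed combination of integrals of the form
\begin{equation*}
	\int \varphi_x^\lambda(y_1)\, \varphi_x^\lambda(y_2)\, \bigl[K_{AB}(\tau_1,y_1;\tau_2,y_2)\bigr]^m\, dy_1\, dy_2, \qquad \tau_1,\tau_2 \in \{s,t\}.
\end{equation*}
Applying the telescoping identity $a^m - b^m = (a-b)\sum_{j=0}^{m-1} a^j b^{m-1-j}$ twice reduces the task to three ingredients, each of which I expect to derive from the Green-function bounds in the appendix: a uniform-in-$\eps$ logarithmic a priori size $|K_{AB}(z_1,z_2)| \lesssim 1 + |\log|y_1-y_2||$; a temporal regularity estimate $|K_{AA}(t,y_1;t,y_2) - K_{AA}(s,y_1;t,y_2)| \lesssim |t-s|^\alpha \bigl(1 + |y_1-y_2|^{-\alpha}\bigr)$; and the quantitative homogenisation estimate $|(K_\eps - K_{\eps,0})(z_1,z_2)| \lesssim \eps^\alpha \bigl(1 + |y_1-y_2|^{-\alpha}\bigr)$, with an analogue for $K_0 - K_{0,\eps}$. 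The last is the heart of the matter: from $\psi_\eps - \psi_0 = \int_{-\infty}^{t} (G_\eps - G_0)(t-r)\,\xi(r)\,dr$, the kernel $K_\eps - K_{\eps,0}$ becomes a time integral of the composition $G_\eps \circ (G_\eps - G_0)^{*}$, and the convergence rate of $G_\eps \to G_0$ in suitable operator norms imports the algebraic rate in $\eps$.

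The exponent structure in \eqref{eq:linear_wick_bound}, with $|t-s|$ and $\eps$ entering to the common power $\delta$ and $\lambda^{-1}$ to a multiple of it, emerges from real interpolation between the trivial pointwise bound (no smallness in $\eps$ or $|t-s|$, only a logarithmic singularity) and the strong bound (full rate $\eps^\alpha$ or $|t-s|^\alpha$, paid for by a polynomial singularity in $|y_1-y_2|^{-\alpha}$). Splitting $\Phi_\eps(t,s) = [\psi_\eps^{\diamond m}(t) - \psi_0^{\diamond m}(t)] - [\psi_\eps^{\diamond m}(s) - \psi_0^{\diamond m}(s)]$ allows one to interpolate independently between the pointwise-in-time convergence bound and the uniform-in-$\eps$ time-regularity bound, producing the matching factor $|t-s|^\delta \eps^\delta$. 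For $\delta$ small, the residual spatial singularity is integrable against $\varphi_x^\lambda \otimes \varphi_x^\lambda$, and tracking the scaling $\|\varphi_x^\lambda\|_{L^\infty} \lesssim \lambda^{-2}$ together with $\diam(\textup{supp}\,\varphi_x^\lambda) \lesssim \lambda$ produces a factor $\lambda^{-C\delta}$ with $C$ bounded by $8$ after a pessimistic but elementary count. The convergence \eqref{eq:linear_wick_convergence} then follows by combining \eqref{eq:linear_wick_bound} with hypercontractivity (to reach arbitrary $p$) and Kolmogorov's criterion in $\cC^{-\kappa}(\dD)$, the constraint $\kappa < \frac{1}{100 n}$ providing the needed slack.

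The main obstacle is the quantitative homogenisation estimate for $G_\eps - G_0$ that must be uniform up to the boundary of $\dD$ under Dirichlet conditions. Interior convergence rates are classical, but boundary layers typically degrade them, and the admissible range of $\delta$ in \eqref{eq:linear_wick_bound}---and hence the regularity of the limiting Wick powers---will be dictated by whatever boundary-uniform bound the appendix produces. A secondary subtlety is the accounting of the $|\log|y_1-y_2||^{m-1}$ factors produced by telescoping the $m$-th power of the singular two-dimensional kernel: absorbing them into a polynomial loss forces $\delta$ to depend on $n$, which is the source of the smallness assumption $\kappa < \frac{1}{100 n}$.
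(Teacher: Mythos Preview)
Your proposal is correct and follows the same overall architecture as the paper: reduce to second-moment bounds via the Wick isometry, establish separately a uniform bound, a time-regularity bound, and an $\eps$-smallness bound on the relevant covariance kernels, then interpolate to extract the matched power $(|t-s|\wedge 1)^\delta \eps^\delta \lambda^{-8\delta}$ and finish with hypercontractivity plus Kolmogorov. The exponent counting you sketch is essentially the paper's.

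The one substantive technical difference concerns precisely the point you flag as the ``main obstacle''. You propose to obtain the $\eps$-smallness via a \emph{pointwise} bound $|K_{\eps,\eps}-K_{\eps,0}|(y_1,y_2) \lesssim \eps^\alpha |y_1-y_2|^{-\alpha}$, derived from pointwise Green-function convergence $|G_\eps - G_0| \lesssim \eps^{1/2}(\sqrt{t}+|x-y|)^{-d-1}$. This works, but as you note, boundary-uniform pointwise rates are delicate. The paper instead proves the $\eps$-smallness bound in integrated form, showing $\|\rho_{\eps,\eta}-\rho_{0,\eta}\|_{L^2(\dD^2)} \lesssim \eps|\log\eps|$ directly from the $L^2\to L^2$ semi-group operator estimate $\|e^{t\lL_\eps}-e^{t\lL_0}\|_{L^2\to L^2} \lesssim \eps(t+\eps^2)^{-1/2}e^{-ct}$ (Suslina), which is global on $\dD$ and requires no boundary analysis. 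This yields $\E|\scal{\psi_\eps^{\diamond m}(t)-\psi_0^{\diamond m}(t),\varphi}|^2 \lesssim \eps^{1-\kappa}\|\varphi\|_{L^2}^2$; applied to $\varphi_x^\lambda$ one picks up $\|\varphi_x^\lambda\|_{L^2}^2 \sim \lambda^{-2}$, and interpolating against the uniform bound $\lambda^{-2\delta}$ gives $\eps^\delta\lambda^{-4\delta}$. In short: your route via pointwise kernel bounds is viable, but the paper's $L^2$ operator-norm route is cleaner and dissolves exactly the boundary concern you raised.
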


It suffices to prove \eqref{eq:linear_wick_bound}. The bounds in \eqref{eq:linear_wick_convergence} follows from \eqref{eq:linear_wick_bound} and Kolmogorov's criterion. The rest of the section is devoted to the proof of \eqref{eq:linear_wick_bound}. 

\begin{lem} \label{lem:corr_linear}
    For every $\delta>0$, we have
    \begin{equation*}
        \E \big( \psi_\eps(s,x) \psi_\eps(t,y) \big) \lesssim_\delta \big( \sqrt{|t-s|} + |x-y| \big)^{-\delta}
    \end{equation*}
    uniformly over all $x, y \in \dD$ and all $s, t \geq 0$. 
\end{lem}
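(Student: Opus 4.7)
The plan is to reduce the covariance to an integral of the heat kernel $G_\eps$ via stationarity and symmetry, and then exploit uniform Aronson-type Gaussian upper bounds for $G_\eps$ together with the Dirichlet decay.

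First, assume without loss of generality that $s \leq t$. Using the stationary representation \eqref{eq:linear_stationary} together with It\^o isometry for the Gaussian stochastic integral, and the fact that $\lL_\eps$ is self-adjoint (so $G_\eps(\tau;\cdot,\cdot)$ is symmetric) and satisfies the semigroup identity
\begin{equation*}
    \int_{\dD} G_\eps(a;x,z) G_\eps(b;z,y) \, {\rm d}z = G_\eps(a+b;x,y)\;,
\end{equation*}
I would write
\begin{equation*}
    \E\bigl(\psi_\eps(s,x) \psi_\eps(t,y)\bigr)
    = \int_{0}^{\infty} \int_{\dD} G_\eps(\tau;x,z) G_\eps(\tau + (t-s); y,z) \, {\rm d}z \, {\rm d}\tau
    = \frac{1}{2}\int_{|t-s|}^{\infty} G_\eps(u; x, y) \, {\rm d}u\;.
\end{equation*}

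Next, I would invoke the $\eps$-uniform Aronson-type Gaussian upper bound together with the exponential decay from the first Dirichlet eigenvalue (which should appear in the appendix referenced in the introduction): for some $c, \mu > 0$ independent of $\eps$,
\begin{equation*}
    G_\eps(u; x, y) \lesssim u^{-1} \exp\Bigl(-\frac{c|x-y|^2}{u} - \mu u\Bigr)\;, \qquad u > 0\;, \ x, y \in \dD\;.
\end{equation*}
Setting $r = |x-y|$ and $\tau_0 = |t-s|$, the problem then reduces to estimating
\begin{equation*}
    I(\tau_0, r) \,:=\, \int_{\tau_0}^{\infty} u^{-1} \exp\Bigl(-\frac{c r^2}{u} - \mu u\Bigr) \, {\rm d}u
\end{equation*}
by $(\sqrt{\tau_0} + r)^{-\delta}$ for arbitrarily small $\delta>0$.

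The elementary analysis of $I(\tau_0, r)$ splits according to which of $\tau_0$ or $r^2$ dominates. When $\tau_0 \geq r^2$, the integral is bounded by $\int_{\tau_0}^1 u^{-1} \, {\rm d}u + O(1) \lesssim |\log \tau_0| + 1$, which is $\lesssim \tau_0^{-\delta/2} = (\sqrt{\tau_0})^{-\delta}$ for any $\delta > 0$. When $\tau_0 < r^2$, I substitute $v = r^2/u$ on $(\tau_0, r^2)$ to turn the integral into $\int_{1}^{r^2/\tau_0} v^{-1} e^{-cv} {\rm d}v = O(1)$, while the tail $u \geq r^2$ contributes $\int_{r^2}^{\infty} u^{-1} e^{-\mu u} {\rm d}u$, which again is $\lesssim |\log r|+1 \lesssim r^{-\delta}$. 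The regime $\sqrt{\tau_0} + r \gtrsim 1$ is trivial from the $e^{-\mu u}$ decay. Combining, $I(\tau_0, r) \lesssim_\delta (\sqrt{\tau_0} + r)^{-\delta}$, which is exactly the claim.

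The only genuine input beyond calculus is the uniform-in-$\eps$ Gaussian upper bound on $G_\eps$ with exponential tail from the Dirichlet spectral gap; this is the main obstacle, and it is standard (Aronson, Nash--De Giorgi--Moser) under Assumption~\ref{as:a}, so I would simply cite the relevant estimate from the appendix.
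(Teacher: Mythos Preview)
Your argument is correct and follows essentially the same route as the paper: express the covariance via It\^o isometry as an integral of the Green's function and then apply uniform Aronson-type pointwise bounds on $G_\eps$. The paper leaves the integral estimate implicit, while you additionally exploit the semigroup identity to collapse the double integral to $\tfrac12\int_{|t-s|}^\infty G_\eps(u;x,y)\,{\rm d}u$ and carry out the estimate explicitly; this is a nice simplification but not a genuinely different approach.
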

\begin{proof}
We have the expression
\begin{equation*}
    \E \big( \psi_\eps(s,x) \psi_\eps(t,y) \big) = \int_{-\infty}^{s \wedge t} \Big(\int_{\dD} G_{\eps}(s-r,x,z) G_{\eps}(t-r,y,z) {\rm d}z \Big) {\rm d}r\;.
\end{equation*}
The claim then follows directly from the pointwise bounds on the Green's function $G_\eps$ in \eqref{eq:green_pointwise}. 
\end{proof}

\begin{prop} \label{prop:linear_wick_fixed_time}
    For every integer $m$ and every $\delta \in (0,1)$, we have
    \begin{equation*}
        \E |\scal{\psi_\eps^{\diamond m}(t), \varphi^\lambda_x}|^{2} \lesssim_{\delta,m} \lambda^{-2\delta}
    \end{equation*}
    uniformly over $\eps \in [0,1]$, $\lambda \in (0,1)$, $t \in \R^+$, $x \in \dD$ and $\varphi \in \cC^{\kappa}(\dD)$. 
\end{prop}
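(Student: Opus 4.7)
The strategy is to square and expand, use the Wiener chaos isometry to reduce to the $m$-th power of the pointwise covariance, and then apply Lemma~\ref{lem:corr_linear} together with a scaling change of variables.

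First I would write
\begin{equation*}
\E \bigl|\scal{\psi_\eps^{\diamond m}(t), \varphi_x^\lambda}\bigr|^2 = \int_\dD \int_\dD \varphi_x^\lambda(y_1)\, \varphi_x^\lambda(y_2) \, \E\bigl[ \psi_\eps^{\diamond m}(t,y_1) \psi_\eps^{\diamond m}(t,y_2) \bigr] \, dy_1 \, dy_2.
\end{equation*}
Since $\psi_\eps(t,\cdot)$ is a centred Gaussian field and the Wick products are taken with respect to its Gaussian structure, the standard Wiener chaos identity collapses the correlator to
\begin{equation*}
\E\bigl[ \psi_\eps^{\diamond m}(t,y_1) \psi_\eps^{\diamond m}(t,y_2) \bigr] = m! \, \bigl( \E[\psi_\eps(t,y_1) \psi_\eps(t,y_2)] \bigr)^m.
\end{equation*}

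Next I would apply Lemma~\ref{lem:corr_linear} at $s=t$ with a small exponent $\delta'>0$ to be chosen shortly, obtaining uniformly in $\eps \in [0,1]$ the pointwise bound $|\E[\psi_\eps(t,y_1)\psi_\eps(t,y_2)]|^m \lesssim_{\delta',m} |y_1 - y_2|^{-m\delta'}$. Substituting this and performing the change of variables $y_i = x + \lambda z_i$ (using that $\varphi_x^\lambda(y) = \lambda^{-2} \varphi((y-x)/\lambda)$ in dimension two) turns the double integral into
\begin{equation*}
\lambda^{-m\delta'} \int_{\R^2} \int_{\R^2} \varphi(z_1)\, \varphi(z_2) \, |z_1 - z_2|^{-m\delta'} \, dz_1 \, dz_2.
\end{equation*}
The remaining integral is finite as soon as $m\delta' < 2$, since $\varphi$ is bounded with compact support. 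Choosing $\delta' = 2\delta/m$, which is admissible because $\delta < 1$, then yields the claimed bound $\lesssim_{\delta,m} \lambda^{-2\delta}$.

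The argument is essentially routine once Lemma~\ref{lem:corr_linear} is in hand; there is no real obstacle. The only point to keep in mind is that in two dimensions the covariance $\E[\psi_\eps(t,y_1)\psi_\eps(t,y_2)]$ is only logarithmically singular on the diagonal, so the flexibility to absorb an arbitrarily small polynomial loss per factor coming from Lemma~\ref{lem:corr_linear} is more than enough. Uniformity in $\eps \in [0,1]$ (including $\eps=0$) is inherited directly from the corresponding uniformity in Lemma~\ref{lem:corr_linear}, which in turn rests on the uniform Aronson-type Gaussian bounds for $G_\eps$.
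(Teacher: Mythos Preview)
Your proof is correct and follows essentially the same route as the paper: Wick's formula reduces the second moment to $m!$ times the $m$-th power of the two-point function, Lemma~\ref{lem:corr_linear} gives the pointwise bound, and the relabelling $\delta' = 2\delta/m$ is exactly the paper's ``replacing $m\delta$ by $2\delta$''. Your version is in fact more explicit about the scaling change of variables and the integrability condition $m\delta' < 2$.
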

\begin{proof}
    The assertion is a direct consequence of the correlation bound in Lemma~\ref{lem:corr_linear} and Wick's formula, and replacing $m \delta$ by $2 \delta$. 
\end{proof}

\begin{prop} \label{prop:linear_wick_time_difference}
    For every integer $m$ and every $\delta \in (0,1)$, we have
    \begin{equation*}
        \E |\scal{\psi_\eps^{\diamond m}(t) - \psi_\eps^{\diamond m}(s), \varphi_x^\lambda}|^2 \lesssim_{m,\delta} (|t-s| \wedge 1)^{\delta} \lambda^{-3\delta}
    \end{equation*}
    uniformly over $\eps \in [0,1]$, $\lambda \in (0,1)$, $s,t \in \R^+$ and $x \in \dD$. 
\end{prop}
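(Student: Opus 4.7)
The plan is to reduce the bound to a pointwise estimate on the covariance kernel $K_\eps(\tau; y, y') := \E[\psi_\eps(0, y)\psi_\eps(\tau, y')]$, and then to exploit its explicit representation in terms of the Green's function $G_\eps$. By Gaussianity of $\psi_\eps$, stationarity in time, and Wick's formula, one obtains
\[
\E\bigl|\scal{\psi_\eps^{\diamond m}(t) - \psi_\eps^{\diamond m}(s), \varphi_x^\lambda}\bigr|^2 = 2\, m! \iint \varphi_x^\lambda(y)\,\varphi_x^\lambda(y')\, \bigl[K_\eps(0; y, y')^m - K_\eps(|t-s|; y, y')^m\bigr] \,dy\, dy'\;.
\]
A direct computation using the semigroup property of $G_\eps$ and its spatial symmetry (which holds because $a$ is symmetric, so $\lL_\eps$ is self-adjoint) yields the identity $K_\eps(\tau; y, y') = \tfrac{1}{2} \int_\tau^\infty G_\eps(w; y, y')\, dw$. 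In particular $K_\eps(\tau; y, y') \geq 0$ is decreasing in $\tau$, and $K_\eps(0; y, y') - K_\eps(\tau; y, y') = \tfrac{1}{2}\int_0^\tau G_\eps(w; y, y')\, dw$. The Aronson-type Gaussian upper bound $G_\eps(w; y, y') \lesssim w^{-1} e^{-c|y-y'|^2/w}$ from the appendix, together with the elementary inequality $e^{-s} \lesssim_\beta s^{-\beta}$, then gives $0 \leq K_\eps(0; y, y') - K_\eps(\tau; y, y') \lesssim_\beta \tau^\beta |y-y'|^{-2\beta}$ for every $\beta \in (0,1)$.

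To incorporate the $m$-th power, I would factor
\[
K_\eps(0)^m - K_\eps(\tau)^m = \bigl(K_\eps(0) - K_\eps(\tau)\bigr)\sum_{j=0}^{m-1} K_\eps(0)^j\, K_\eps(\tau)^{m-1-j}\;,
\]
and use the non-negativity of $K_\eps$ together with the pointwise bound $K_\eps(\tau'; y, y') \lesssim_{\delta'} |y-y'|^{-\delta'}$ for any $\delta' > 0$ from Lemma~\ref{lem:corr_linear} to control the sum by $m |y-y'|^{-(m-1)\delta'}$. This yields
\[
\bigl|K_\eps(0; y, y')^m - K_\eps(\tau; y, y')^m\bigr| \lesssim_{m,\beta,\delta'} \tau^\beta\, |y-y'|^{-2\beta - (m-1)\delta'}\;.
\]
Rescaling $y = x + \lambda z$, $y' = x + \lambda z'$ in the double integral gives $\iint |\varphi_x^\lambda(y)\varphi_x^\lambda(y')| |y-y'|^{-\gamma}\, dy\, dy' \lesssim \lambda^{-\gamma}$ for $\gamma < 2$. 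Choosing $\beta = \delta$ and $\delta'$ small enough that $(m-1)\delta' \leq \delta$ (and assuming without loss of generality that $\delta$ is small enough for the integrability condition $3\delta < 2$) produces the advertised bound $\lesssim \tau^\delta \lambda^{-3\delta}$ for $\tau = |t-s| \leq 1$. The range $|t-s| \geq 1$ is immediate from Proposition~\ref{prop:linear_wick_fixed_time} applied separately to $\psi_\eps^{\diamond m}(t)$ and $\psi_\eps^{\diamond m}(s)$, which yields $\lesssim \lambda^{-2\delta} \leq \lambda^{-3\delta}$ since $\lambda \in (0,1)$.

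The only non-routine step I anticipate is the identity $K_\eps(\tau; y, y') = \tfrac{1}{2}\int_\tau^\infty G_\eps(w; y, y')\, dw$, which collapses the double Duhamel time integration defining $K_\eps$ into a single heat-kernel integral and makes the time regularity of $K_\eps$ controllable directly by the Aronson bound; after that, the telescoping in $m$, the spatial scaling, and the choice of $\beta$ and $\delta'$ are mechanical.
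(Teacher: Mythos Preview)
Your proof is correct and takes a genuinely different, and in some ways cleaner, route than the paper's.

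The paper expands the second moment, then reduces to controlling the two ``off-diagonal'' covariances
\[
\E\bigl[\psi_\eps(t,y)\bigl(\psi_\eps(t,z)-\psi_\eps(s,z)\bigr)\bigr]
\quad\text{and}\quad
\E\bigl[\psi_\eps(s,y)\bigl(\psi_\eps(t,z)-\psi_\eps(s,z)\bigr)\bigr]\;.
\]
It writes each as a double time integral over $G_\eps$, splits the $r$-integration into $[s,t]$ and $(-\infty,s]$, and on the second piece uses a time-difference estimate for $G_\eps$ (derived from the gradient bound \eqref{eq:green_gradient_pointwise}) in addition to the pointwise bound \eqref{eq:green_pointwise}.

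Your key observation is the closed form $K_\eps(\tau;y,y')=\tfrac12\int_\tau^\infty G_\eps(w;y,y')\,dw$, which collapses the covariance difference to $\tfrac12\int_0^\tau G_\eps(w;y,y')\,dw$ and hence only needs the \emph{pointwise} bound on $G_\eps$ --- no time-difference or gradient estimate is required. This identity hinges on the symmetry $G_\eps(w;y,y')=G_\eps(w;y',y)$, i.e.\ on the self-adjointness of $\lL_\eps$ coming from Assumption~\ref{as:a}; the paper's argument does not use this and would extend to non-symmetric $a$. So your approach is more economical in the stated setting, while the paper's is slightly more robust.

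One small remark: the appendix bound \eqref{eq:green_pointwise} is the polynomial $(\sqrt{w}+|y-y'|)^{-2}$ rather than the Gaussian form you quote. Your conclusion $\int_0^\tau G_\eps(w;y,y')\,dw \lesssim_\beta \tau^\beta |y-y'|^{-2\beta}$ still follows from the polynomial bound (it gives $\lesssim 1+\log^+(\tau/|y-y'|^2)\lesssim_\beta (\tau/|y-y'|^2)^\beta$), so this is only a citation mismatch, not a gap.
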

\begin{proof}
    If $|t-s|>1$, the assertion follows directly from Proposition~\ref{prop:linear_wick_fixed_time}. So we only need to consider the case $|t-s|<1$. We assume without loss of generality that $0<s<t$. It suffices to show the correlation bound
    \begin{equation*}
        \E \big( \psi_{\eps}^{\diamond m}(t,y) - \psi_{\eps}^{\diamond m}(s,y) \big) \big( \psi_{\eps}^{\diamond m}(t,z) - \psi_{\eps}^{\diamond m}(s,z) \big) \lesssim (t-s)^{\delta} |y-z|^{-3\delta}\;.
    \end{equation*}
    By Wick's formula, the bound in Lemma~\ref{lem:corr_linear}, and replacing $(m-1) \delta$ by $\delta$, it suffices to show the bounds
    \begin{equation*}
        \begin{split}
        \E \big[ \psi_{\eps}(t,y) \big( \psi_{\eps}(t,z) - \psi_{\eps}(s,z) \big) \big] &\lesssim (t-s)^{\delta} |y-z|^{-2\delta}\;,\\
        \E \big[ \psi_{\eps}(s,y) \big( \psi_{\eps}(t,z) - \psi_{\eps}(s,z) \big)  \big] &\lesssim (t-s)^{\delta} |y-z|^{-2\delta}\;.
        \end{split}
    \end{equation*}
    We give details for the first one. We have the expression
    \begin{equation*}
        \begin{split}
        \E \big[ \psi_{\eps}(t,y) &\big( \psi_{\eps}(t,z) - \psi_{\eps}(s,z) \big) \big] = \int_{s}^{t} \int_{\dD} G_{\eps}(t-r,y,w) G_{\eps}(t-r,z,w) {\rm d}w {\rm d}r\\
        &+ \int_{-\infty}^{s} \int_{\dD} G_{\eps}(t-r,y,w) \big( G_{\eps}(t-r,z,w) - G_{\eps}(s-r,z,w) \big) {\rm d}w {\rm d}r\;.
        \end{split}
    \end{equation*}
    The desired bound then follows from the pointwise bound \eqref{eq:green_pointwise} and difference bound \eqref{eq:green_gradient_pointwise} for the Green function. The other correlation bound follows in essentially the same way. This completes the proof of the proposition. 
\end{proof}

Before we state the bound for $\psi_\eps^{\diamond m}(t) - \psi_0^{\diamond m}(t)$ for general $m \geq 1$, we first give a few preliminary lemmas. For $\eps_1, \eps_2 \in [0,1]$, let
\begin{equation*}
    \rho_{\eps_1, \eps_2}(x,y) := \E \big( \psi_{\eps_1}(t,x) \psi_{\eps_2}(t,y) \big) = \int_{\R^+} \int_{\dD} G_{\eps_1}(r,x,z) G_{\eps_2}(r,y,z) {\rm d}z {\rm d}r\;,
\end{equation*}
which is independent of $t$. We have the following lemma. 

\begin{lem} \label{lem:rho_uniform_bound}
    For every $p \geq 1$, there exists $C>0$ depending on $p$ and the domain $\dD$ only such that
    \begin{equation*}
        \|\rho_{\eps_1, \eps_2}\|_{L^p(\dD \times \dD)} \leq C\;.
    \end{equation*}
    In particular, the bound is uniform in $\eps_1, \eps_2 \in [0,1]$. 
\end{lem}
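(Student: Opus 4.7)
The plan is to reduce the $L^p$ bound to the same pointwise heat-kernel estimate used in Lemma~\ref{lem:corr_linear}, but now for the ``mixed'' kernel. Since the Green function bound in \eqref{eq:green_pointwise} is assumed to hold uniformly over $\eps \in [0,1]$ (Aronson/Nash type Gaussian bounds for divergence-form operators with Dirichlet boundary conditions), the argument goes through without any change for the mixed pair $(G_{\eps_1}, G_{\eps_2})$.

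First I would apply the pointwise bound $G_{\eps_i}(r,x,z) \lesssim r^{-1} e^{-c|x-z|^2/r}$ inside the defining integral
\begin{equation*}
    \rho_{\eps_1,\eps_2}(x,y) = \int_0^\infty \int_\dD G_{\eps_1}(r,x,z) G_{\eps_2}(r,y,z)\, {\rm d}z\, {\rm d}r
\end{equation*}
and perform the spatial integral in $z$ by completing the square. Using $|x-z|^2 + |y-z|^2 = 2|z - (x+y)/2|^2 + \tfrac{1}{2}|x-y|^2$, one obtains
\begin{equation*}
    \int_{\R^2} G_{\eps_1}(r,x,z) G_{\eps_2}(r,y,z)\, {\rm d}z \lesssim \frac{1}{r}\, e^{-c|x-y|^2/r}\;,
\end{equation*}
uniformly in $\eps_1, \eps_2 \in [0,1]$.

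Next I would split the $r$-integral at $r=1$. For $r \in (0,1]$, the substitution $u = |x-y|^2/r$ gives $\int_0^1 r^{-1} e^{-c|x-y|^2/r}\, {\rm d}r \lesssim 1 + \log(1/|x-y|)$. For $r \in [1,\infty)$, the Dirichlet boundary condition on the bounded domain $\dD$ provides a spectral gap, so that $G_{\eps}(r,x,z) \lesssim e^{-\lambda r}$ for some $\lambda > 0$ uniform in $\eps$; this yields a bounded contribution. Altogether,
\begin{equation*}
    \rho_{\eps_1,\eps_2}(x,y) \lesssim 1 + \log\bigl(1/|x-y|\bigr)\;,
\end{equation*}
or more crudely, for every $\delta > 0$, $\rho_{\eps_1,\eps_2}(x,y) \lesssim_\delta |x-y|^{-\delta}$ uniformly in $\eps_1, \eps_2$. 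This is exactly the same content as Lemma~\ref{lem:corr_linear} in the mixed case, since the proof of that lemma relied only on the uniform Green's function bound.

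Finally, since $|x-y|^{-\delta}$ is in $L^p(\dD \times \dD)$ whenever $\delta < 2/p$, choosing $\delta$ small enough depending on $p$ gives the desired uniform $L^p$ bound. The only potentially subtle point is the exponential decay of $G_\eps$ at large $r$ uniformly in $\eps$, but this is a consequence of the uniform ellipticity in Assumption~\ref{as:a} and the Dirichlet boundary condition (the first Dirichlet eigenvalue is bounded below uniformly in $\eps$); this uniform decay is the main prerequisite supplied by the Green function estimates in the appendix.
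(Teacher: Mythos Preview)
Your proposal is correct and follows essentially the same route as the paper: split the $r$-integral at $r=1$ and apply the uniform pointwise Green's function bound from the appendix to obtain a singularity in $|x-y|$ that is $L^p$-integrable on $\dD \times \dD$. The only cosmetic difference is that you work with the Gaussian (Aronson) form of the bound and invoke the spectral gap for large $r$, whereas the paper's polynomial bound $G_\eps(r,\cdot,\cdot) \lesssim (\sqrt{r})^{-2}$ already yields $\int_1^\infty r^{-2}\,{\rm d}r < \infty$ directly, so the exponential decay, while true, is not needed.
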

\begin{proof}
    It suffices to decompose the range of integration of $r \in \R^+$ into $r \in [0,1]$ and $r \in [1,+\infty)$ and apply the pointwise bound \eqref{eq:green_pointwise}. 
\end{proof}

\begin{lem} \label{lem:rho_difference_bound}
    For every $\delta>0$, we have
    \begin{equation*}
        \|\rho_{\eps_1,\eps_2} - \rho_{\tilde{\eps}_1,\tilde{\eps}_2}\|_{L^{2}(\dD^2)} \lesssim_\delta \big( \max\{\eps_1, \eps_2, \tilde{\eps}_1, \tilde{\eps}_2\} \big)^{1-\delta}\;,
    \end{equation*}
    uniformly in the four parameters taken in $[0,1]$. As a consequence, for every $\delta > 0$ and $p \geq 2$, we have
    \begin{equation*}
        \|\rho_{\eps_1,\eps_2} - \rho_{\tilde{\eps}_1,\tilde{\eps}_2}\|_{L^{p}(\dD^2)} \lesssim_{p,\delta} \big( \max\{\eps_1, \eps_2, \tilde{\eps}_1, \tilde{\eps}_2\} \big)^{1-\delta}\;.
    \end{equation*}
\end{lem}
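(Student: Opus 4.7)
The plan is to establish the $L^2$ bound by combining Minkowski's inequality in the time integral with an operator-norm homogenisation estimate for the semigroup on $L^2(\dD)$, and then to deduce the $L^p$ version by interpolation against the uniform bound of Lemma~\ref{lem:rho_uniform_bound}. I would first telescope
\begin{equation*}
\rho_{\eps_1,\eps_2} - \rho_{\tilde\eps_1,\tilde\eps_2} = (\rho_{\eps_1,\eps_2} - \rho_{\tilde\eps_1,\eps_2}) + (\rho_{\tilde\eps_1,\eps_2} - \rho_{\tilde\eps_1,\tilde\eps_2})
\end{equation*}
to reduce to a single-index difference, and by the symmetry of $\rho$ focus on the first summand. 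Rewriting the $z$-integral as an action of $e^{r\lL_{\eps_1}}$ on the function $z\mapsto G_{\eps_2}(r,y,z)$, one obtains
\begin{equation*}
(\rho_{\eps_1,\eps_2} - \rho_{\tilde\eps_1,\eps_2})(x,y) = \int_0^\infty \bigl[(e^{r\lL_{\eps_1}} - e^{r\lL_{\tilde\eps_1}}) G_{\eps_2}(r,y,\cdot)\bigr](x)\,{\rm d}r\;,
\end{equation*}
so that Minkowski in $x$ yields
\begin{equation*}
\|(\rho_{\eps_1,\eps_2} - \rho_{\tilde\eps_1,\eps_2})(\cdot,y)\|_{L^2_x} \leq \int_0^\infty \|e^{r\lL_{\eps_1}} - e^{r\lL_{\tilde\eps_1}}\|_{L^2 \to L^2}\,\|G_{\eps_2}(r,y,\cdot)\|_{L^2_z}\,{\rm d}r\;.
\end{equation*}

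The two appendix-type inputs needed are the two-dimensional Gaussian bound $\|G_{\eps_2}(r,y,\cdot)\|_{L^2_z} \lesssim r^{-1/2}$ for $r \leq 1$ (with exponential decay for $r\geq 1$ coming from the Dirichlet spectral gap on $\dD$), and the $L^2$-operator-norm homogenisation estimate
\begin{equation*}
\|e^{r\lL_{\eps_1}} - e^{r\lL_{\tilde\eps_1}}\|_{L^2\to L^2} \lesssim \min\bigl(2,\ \eps_\ast/\sqrt{r}\bigr)\;, \qquad \eps_\ast = \max(\eps_1,\tilde\eps_1)\;,
\end{equation*}
where the factor $2$ comes from contractivity and $\eps_\ast/\sqrt{r}$ is the standard rate valid for $r \geq \eps_\ast^2$. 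Substituting these and splitting the time integral at $r=\eps_\ast^2$ gives
\begin{equation*}
\int_0^{\eps_\ast^2} 2\,r^{-1/2}\,{\rm d}r + \int_{\eps_\ast^2}^{1} \frac{\eps_\ast}{\sqrt{r}}\cdot r^{-1/2}\,{\rm d}r + (\text{exp.\ small tail}) \lesssim \eps_\ast + \eps_\ast |\log \eps_\ast| \lesssim \eps_\ast^{1-\delta}
\end{equation*}
for every $\delta > 0$. Since this bound is uniform in $y$, integrating in $y$ produces the desired $L^2(\dD^2)$ estimate.

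For $p \geq 2$, I would interpolate the $L^2$ bound just obtained against the uniform $L^q$ bound $\|\rho_{\eps_1,\eps_2} - \rho_{\tilde\eps_1,\tilde\eps_2}\|_{L^q(\dD^2)} \leq 2C_q$ of Lemma~\ref{lem:rho_uniform_bound}: the three-line inequality $\|f\|_{L^p} \leq \|f\|_{L^2}^{\theta}\|f\|_{L^q}^{1-\theta}$ with $\tfrac{1}{p} = \tfrac{\theta}{2} + \tfrac{1-\theta}{q}$, applied for $q$ sufficiently large, transfers the polynomial rate in $\eps_\ast$ to $L^p$, the resulting exponent being relabelled into the form $1-\delta$ by allowing a slightly larger $\delta$ depending on $p$. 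The main subtlety in the whole argument is the borderline logarithmic loss $|\log \eps_\ast|$ from $\int_{\eps_\ast^2}^1 r^{-1}\,{\rm d}r$, which is precisely the obstruction that forces the small exponent loss $\delta > 0$ in the statement rather than the cleaner rate $\eps_\ast$; the proof relies crucially on the uniformity (in the oscillation scale) of the $L^2$-operator-norm homogenisation estimate, whose derivation is presumably the main technical content deferred to the appendix.
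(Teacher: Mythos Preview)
Your proof is correct and follows essentially the same route as the paper: Minkowski in time, the $L^2\to L^2$ semigroup homogenisation rate against the $L^2$ Green-function bound $\|G_\eta(r,\cdot,\cdot)\|_{L^2}\lesssim r^{-1/2}$, yielding the borderline $\eps|\log\eps|$, and then interpolation with Lemma~\ref{lem:rho_uniform_bound} for $L^p$. The only cosmetic difference is that the paper telescopes through the homogenised operator, writing $\rho_{\eps_1,\eps_2}-\rho_{\tilde\eps_1,\tilde\eps_2}$ as four terms of the form $\rho_{\eps,\eta}-\rho_{0,\eta}$ so that the available estimate \eqref{eq:semi_group_difference} (which compares $e^{r\lL_\eps}$ to $e^{r\lL_0}$, not two oscillatory semigroups to each other) applies directly; your two-term telescoping is equivalent after one further triangle inequality $\|e^{r\lL_{\eps_1}}-e^{r\lL_{\tilde\eps_1}}\|\le\|e^{r\lL_{\eps_1}}-e^{r\lL_0}\|+\|e^{r\lL_{\tilde\eps_1}}-e^{r\lL_0}\|$.
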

\begin{proof}
    The second bound follows from the first bound and Lemma~\ref{lem:rho_uniform_bound}, so we only need to consider the first bound. Since
    \begin{equation*}
        \rho_{\eps_1, \eps_2} - \rho_{\tilde{\eps}_1, \tilde{\eps}_2} = (\rho_{\eps_1, \eps_2} - \rho_{\eps_1,0}) + (\rho_{\eps_1, 0} - \rho_{\eps_1, \tilde{\eps}_2}) + (\rho_{\eps_1, \tilde{\eps}_2} - \rho_{0, \tilde{\eps}_2}) + (\rho_{0, \tilde{\eps}_2} - \rho_{\tilde{\eps}_1, \tilde{\eps}_2})\;,
    \end{equation*}
    it suffices to consider the situation $\eps_1 = \eps$, $\tilde{\eps}_1 = 0$ and $\eps_2 = \tilde{\eps}_2 = \eta \in [0,1]$. We have the expression
    \begin{equation*}
        \rho_{\eps, \eta}(x,y) - \rho_{0,\eta}(x,y) = \int_{\R^+} F_{\eps, \eta, r}(x,y) {\rm d}r\;,
    \end{equation*}
    where
    \begin{equation} \label{eq:operator_double_variable}
        \begin{split}
        F_{\eps,\eta,r}(x,y) &= \int_{\dD} \big( G_\eps(r,x,z) - G_0(r,x,z) \big) G_{\eta}(r,y,z) {\rm d}z\\
        &= \big( (e^{r \lL_\eps} - e^{r \lL_0}) G_{\eta}(r,y,\cdot) \big)(x)\;.
        \end{split}
    \end{equation}
    By triangle inequality, we have
    \begin{equation*}
        \|\rho_{\eps,\eta} - \rho_{0,\eta}\|_{L^2(\dD^2)} \leq \int_{\R^+} \|F_{\eps,\eta,r}\|_{L^2(\dD^2)} {\rm d}r\;.
    \end{equation*}
    By the expression \eqref{eq:operator_double_variable} and the semi-group bound \eqref{eq:semi_group_difference}, we have
    \begin{equation*}
        \int_{\dD} |F_{\eps,\eta,r}(x,y)|^2 {\rm d}x \lesssim \Big(\frac{\eps e^{-cr}}{\sqrt{r} + \eps} \Big)^2 \cdot \|G_{\eta}(r,y,\cdot)\|_{L^2(\dD)}^2\;.
    \end{equation*}
    This in turn gives
    \begin{equation*}
        \|\rho_{\eps,\eta} - \rho_{0,\eta}\|_{L^2(\dD^2)} \lesssim \int_{\R^+} \frac{\eps e^{-cr}}{\sqrt{r} + \eps}  \cdot \|G_{\eta}(r)\|_{L^2(\dD^2)} {\rm d}r \lesssim \int_{\R^+} \frac{\eps e^{-cr}}{\sqrt{r} (\sqrt{r} + \eps)}  {\rm d}r\;,
    \end{equation*}
    where the last inequality follows from the Green's function estimate~\eqref{eq:green_pointwise} and hence the bound
    \begin{equation*}
        \sup_{\eta \in [0,1]} \|G_{\eta}(r,\cdot, \cdot)\|_{L^2(\dD^2)} \lesssim \frac{1}{\sqrt{r}}
    \end{equation*}
    for $r \in \R^+$. This implies
    \begin{equation*}
        \|\rho_{\eps,\eta} - \rho_{0,\eta}\|_{L^2(\dD^2)} \lesssim \eps |\log \eps|
    \end{equation*}
    uniformly in $\eta \in [0,1]$. The conclusion then follows. 
    \end{proof}

\begin{prop} \label{prop:linear_wick_difference}
    For every $m$ and $\kappa>0$, we have
    \begin{equation*}
        \E |\scal{\psi_{\eps}^{\diamond m}(t) - \psi_{0}^{\diamond m}(t), \varphi}|^2 \lesssim_{m,\kappa} \eps^{1-\kappa} \|\varphi\|_{L^2(\dD)}^2\;,
    \end{equation*}
    uniformly in $\eps \in [0,1]$ and $\varphi \in L^2(\dD)$. 
\end{prop}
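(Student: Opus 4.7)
The plan is to reduce the expectation to the $L^2$ norm of an explicit kernel via Wick's formula, and then estimate that kernel by algebraic factorisation combined with the two preceding lemmas.

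First, by Wick's formula applied to the jointly Gaussian pair $(\psi_{\eps_1}(t,x), \psi_{\eps_2}(t,y))$ for $\eps_1, \eps_2 \in \{0, \eps\}$,
\begin{equation*}
    \E\big(\psi_{\eps_1}^{\diamond m}(t,x)\,\psi_{\eps_2}^{\diamond m}(t,y)\big) = m!\,\rho_{\eps_1,\eps_2}(x,y)^m\;.
\end{equation*}
Expanding the square and pairing the four cross terms gives
\begin{equation*}
    \E |\scal{\psi_\eps^{\diamond m}(t) - \psi_0^{\diamond m}(t), \varphi}|^2 = m! \int_{\dD^2} K_\eps(x,y)\,\varphi(x)\varphi(y)\,{\rm d}x\,{\rm d}y\;,
\end{equation*}
where $K_\eps := \rho_{\eps,\eps}^m - \rho_{\eps,0}^m - \rho_{0,\eps}^m + \rho_{0,0}^m$. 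Cauchy--Schwarz on $\dD^2$ against the tensor $\varphi \otimes \varphi$ then reduces the proposition to the kernel estimate $\|K_\eps\|_{L^2(\dD^2)} \lesssim_{m,\kappa} \eps^{1-\kappa}$.

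Next I would factorise $K_\eps$ to isolate explicit small factors. Using the identity $a^m - b^m = (a-b) \sum_{k=0}^{m-1} a^k b^{m-1-k}$ twice yields
\begin{equation*}
    K_\eps = (\rho_{\eps,\eps} - \rho_{\eps,0})\, A_\eps - (\rho_{0,\eps} - \rho_{0,0})\, B_\eps\;,
\end{equation*}
where $A_\eps := \sum_{k=0}^{m-1} \rho_{\eps,\eps}^{k}\rho_{\eps,0}^{m-1-k}$ and $B_\eps := \sum_{k=0}^{m-1} \rho_{0,\eps}^{k}\rho_{0,0}^{m-1-k}$ are polynomial expressions in the kernels $\rho_{\eps_i, \eps_j}$ with indices in $\{0, \eps\}$. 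By Lemma~\ref{lem:rho_uniform_bound} and generalised H\"{o}lder on $\dD^2$, $\|A_\eps\|_{L^r(\dD^2)}$ and $\|B_\eps\|_{L^r(\dD^2)}$ are bounded uniformly in $\eps \in [0,1]$ for every $r \geq 1$. The small factors $\rho_{\eps,\eps} - \rho_{\eps,0}$ and $\rho_{0,\eps} - \rho_{0,0}$ are both covered by Lemma~\ref{lem:rho_difference_bound} with maximum parameter $\eps$, yielding $L^p(\dD^2)$-bounds of order $\eps^{1-\delta}$ for arbitrarily small $\delta > 0$ and every $p \geq 2$. A final application of H\"{o}lder on $\dD^2$ with exponents $(2p, 2q)$ for $p$ large and $\delta = \delta(\kappa)$ small then produces $\|K_\eps\|_{L^2(\dD^2)} \lesssim_{m,\kappa} \eps^{1-\kappa}$.

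I do not anticipate a serious obstacle: the analytic heavy lifting (Green function pointwise bounds and semigroup difference estimates) has already been absorbed into Lemmas~\ref{lem:rho_uniform_bound} and \ref{lem:rho_difference_bound}. The only mildly subtle point is structural, namely writing $K_\eps$ as \emph{two} single differences multiplied by bounded polynomial factors, rather than attempting to cancel further across all four kernels $\rho_{\eps_i,\eps_j}$; the chosen factorisation is already enough to match the exponent $\eps^{1-\kappa}$ demanded by the statement, which is the same rate as in Lemma~\ref{lem:rho_difference_bound} itself.
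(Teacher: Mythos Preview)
Your proposal is correct and follows essentially the same route as the paper: the paper writes down the same Wick identity with kernel $\rho_{\eps,\eps}^m - \rho_{\eps,0}^m - \rho_{0,\eps}^m + \rho_{0,0}^m$ and then says the claim ``follows directly from Lemmas~\ref{lem:rho_uniform_bound} and~\ref{lem:rho_difference_bound}'', which is precisely the Cauchy--Schwarz plus factorisation plus H\"older argument you have spelled out in detail. Your observation that a single-difference factorisation already suffices is exactly right, and matches the paper's remark that the double-difference structure could in principle be exploited to push the rate toward $\eps^2$ but is not needed here.
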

\begin{proof}
    We have the identity
    \begin{equation*}
        \E \big| \scal{\psi_\eps^{\diamond m}(t) - \psi_0^{\diamond m}(t), \varphi} \big|^2 = m! \iint\limits_{\dD^2} \varphi(x) \varphi(y) \big( \rho_{\eps,\eps}^m - \rho_{\eps,0}^{m} - \rho_{0,\eps}^{m} + \rho_{0,0}^{m} \big) {\rm d}x {\rm d}y\;.
    \end{equation*}
    The claim then follows directly from Lemmas~\ref{lem:rho_uniform_bound} and~\ref{lem:rho_difference_bound}. 
\end{proof}

\begin{rmk}
    With some extra effort, one can possibly improve the error rate arbitrarily close to $\eps^2$. But since it does not affect the main statement, we leave the proposition as it is and choose not to pursue further.  
\end{rmk}

\begin{proof} [Proof of Theorem~\ref{thm:linear_stationary}]
    We are now ready to prove Theorem~\ref{thm:linear_stationary}. By Propositions~\ref{prop:linear_wick_fixed_time}  and~\ref{prop:linear_wick_difference}, the left hand side of \eqref{eq:linear_wick_bound} is bounded by
    \begin{equation*}
        \lambda^{-2\delta} \, \wedge \, \eps^{1-\delta} \lambda^{-2} \lesssim \eps^{\delta} \lambda^{-4\delta}\;.
    \end{equation*}
    Combining with Proposition~\ref{prop:linear_wick_time_difference} and replacing $\delta$ by $2\delta$, we can further bound it by
    \begin{equation*}
        \eps^{2 \delta} \lambda^{-8\delta} \, \wedge \, (|t-s| \wedge 1)^{2 \delta} \lambda^{-6\delta} \lesssim \eps^{\delta} (|t-s| \wedge 1)^{\delta} \lambda^{-8\delta}\;,
    \end{equation*}
    which is precisely \eqref{eq:linear_wick_bound}. Since the left hand side of \eqref{eq:linear_wick_bound} is in Wiener chaos of order $m$, the same bound holds if we replace the second moment by $p$-th moment for arbitrary $p$ (note that $\delta$ can be arbitrary). The two convergence statements in Theorem~\ref{thm:linear_stationary} follow from the $p$-th moment version of \eqref{eq:linear_wick_bound} and Kolmogorov's continuity criterion. 
\end{proof}

\section{The remainder equation}
\label{sec:remainder}

Let
\begin{equation*}
    u_\eps = \phi_\eps - \psi_\eps\;, \qquad Y_\eps(t) := u_\eps(t) - e^{t \lL_\eps} u_\eps(0)\;.
\end{equation*}
The main theorem of this section is the following. 

\begin{thm} \label{thm:global_existence}
    For every $T>0$, every $p \geq 1$ and every $\theta \in (0,1)$, there exists $C = C(T,p,\theta)$ such that
    \begin{equation} \label{eq:bound_Y}
        \E \sup_{t \in [0,T]} \|Y_\eps(t)\|_{L^p(\dD)}^{p} \leq C\;,
    \end{equation}
    and
    \begin{equation} \label{eq:bound_u}
        \E \sup_{t \in [0,T]} \big[ (t \wedge 1)^{p\theta} \, \|u_\eps(t)\|_{L^p(\dD)}^{p} \big] < C\;.
    \end{equation}
    Both bounds are uniform in $\eps$. Furthermore, we have
    \begin{equation} \label{eq:bounds_Y_u_global}
    \begin{split}
        \sup_{\eps \in [0,1]} \sup_{t \geq 0} \E \|Y_\eps(t)\|_{L^p(\dD)} < +\infty\;,\\
        \sup_{\eps \in [0,1]} \sup_{t \geq 0} \big[ (t \wedge 1)^{\theta} \, \E \|u_\eps(t)\|_{L^p(\dD)} \big] < +\infty\;.
    \end{split}
    \end{equation}
\end{thm}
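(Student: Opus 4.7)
The plan is to derive both bounds by an $L^p$ energy estimate for $Y_\eps$, using the coercivity of $\lL_\eps$ together with the damping from the leading nonlinearity to absorb every term involving the singular Wick powers. Setting $\tilde{\psi}_\eps(t) := e^{t\lL_\eps}u_\eps(0)$, we have $u_\eps = Y_\eps + \tilde{\psi}_\eps$ and $Y_\eps(0) = 0$, and the equation for $Y_\eps$ reads
\begin{equation*}
\partial_t Y_\eps = \lL_\eps Y_\eps - \sum_{a+b+c = 2n-1}\frac{(2n-1)!}{a!\,b!\,c!}\,\psi_\eps^{\diamond a}\,\tilde{\psi}_\eps^{\,b}\,Y_\eps^{\,c}
\end{equation*}
after expanding the binomial. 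The monomial with $a=b=0$, $c=2n-1$ is the source of damping, while every other \emph{mixed} monomial satisfies $c\le 2n-2$.

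Test the equation against $|Y_\eps|^{p-2}Y_\eps$ and integrate over $\dD$. Thanks to Assumption~\ref{as:a}, the elliptic operator contributes a nonnegative gradient term with a constant uniform in $\eps$, and the leading nonlinearity contributes the damping $-\|Y_\eps(t)\|_{L^{p+2n-2}(\dD)}^{p+2n-2}$. Each mixed term has modulus bounded, by Besov duality, by $\|\psi_\eps^{\diamond a}\|_{\cC^{-\kappa}(\dD)}$ times a positive-regularity norm of $\tilde{\psi}_\eps^{\,b}Y_\eps^{\,c+p-1}$. The latter factor is estimated by H\"older combined with Gagliardo--Nirenberg interpolation, splitting between an $L^1$-bound (controlled by $\|\tilde{\psi}_\eps\|_{L^\infty}^{b}\,\|Y_\eps\|_{L^q}^{c+p-1}$ for a suitable $q$) and a fractional Sobolev bound that is absorbed into the coercivity gradient term. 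Since $c+p-1<p+2n-2$ strictly whenever $c\le 2n-2$, Young's inequality lets us absorb a small fraction of the gradient and damping terms, leaving
\begin{equation*}
\frac{1}{p}\frac{d}{dt}\|Y_\eps(t)\|_{L^p(\dD)}^p + \tfrac12\|Y_\eps(t)\|_{L^{p+2n-2}(\dD)}^{p+2n-2}\;\le\; \Phi_\eps(t),
\end{equation*}
where $\Phi_\eps(t)$ is a polynomial in $\|\psi_\eps^{\diamond m}(t)\|_{\cC^{-\kappa}(\dD)}$ $(0\le m\le 2n-1)$ and appropriate norms of $\tilde{\psi}_\eps(t)$. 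All moments of $\Phi_\eps(t)$ are finite, uniformly in $\eps$, by Theorem~\ref{thm:linear_stationary} and the Green's function estimates of the appendix, with at worst an integrable $(t\wedge 1)^{-c\theta}$ blow-up at $t=0$ coming from the rough initial datum $u_\eps(0)\in\cC^{-\beta}(\dD)$ when smoothed by the semigroup.

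Integrating from $0$ to $t$ with $Y_\eps(0)=0$, then taking supremum in $t\in[0,T]$ and expectation, yields \eqref{eq:bound_Y}. The bound \eqref{eq:bound_u} follows by writing $u_\eps=Y_\eps+\tilde{\psi}_\eps$ and invoking $\|\tilde{\psi}_\eps(t)\|_{L^p(\dD)}\lesssim (t\wedge 1)^{-\theta}\,\|u_\eps(0)\|_{\cC^{-\beta}(\dD)}$ from the Green-kernel bounds in the appendix, which is precisely the source of the $(t\wedge 1)^{p\theta}$ weight. For the uniform-in-time statements \eqref{eq:bounds_Y_u_global}, combine the differential inequality above with the embedding $L^{p+2n-2}(\dD)\hookrightarrow L^p(\dD)$ on the bounded domain, which turns it into an ODE of the form $y' + c\,y^{\gamma}\le \Phi_\eps$ with $\gamma=(p+2n-2)/p>1$; since $\sup_{t\ge 0}\E\,\Phi_\eps(t)^r<\infty$ for every $r\ge 1$ (by the stationarity of $\psi_\eps$ and the decay of $\tilde{\psi}_\eps$), a standard comparison argument gives the stated global bound.

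The main obstacle is the mixed-term estimate: one must pair a distribution $\psi_\eps^{\diamond a}\in\cC^{-\kappa}$ with a polynomial expression in $Y_\eps$ whose only a priori regularity comes from the coercivity gradient term and the $L^{p+2n-2}$-damping. The essential point is that $c+p-1$ is strictly smaller than the damping exponent $p+2n-2$ in every mixed monomial, so Gagliardo--Nirenberg produces a strictly positive fractional power and Young's inequality closes with constants uniform in $\eps$. Handling the Dirichlet boundary condition and ensuring $\eps$-uniform constants in the interpolation inequalities on $\dD$ is the delicate technical point; once these are in place, the scheme follows the $L^p$ energy method used for $\phi^{2n}_2$ in the translation-invariant setting.
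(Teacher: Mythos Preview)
Your approach is essentially the same as the paper's: an $L^{2m}$ energy estimate exploiting uniform ellipticity and the damping term $-\|v_\eps\|_{L^{2n+2m-2}}^{2n+2m-2}$, with the mixed terms controlled via interpolation (the paper uses the Mourrat--Weber inequality $\|f\|_{B_{1,1}^{\kappa}}\lesssim\|\nabla f\|_{L^1}^{\kappa}\|f\|_{L^1}^{1-\kappa}$ from Proposition~\ref{prop:B11_bound} rather than a generic Gagliardo--Nirenberg, but the mechanism is identical). Two organizational differences are worth noting. First, the paper does \emph{not} expand the nonlinearity as a trinomial in $(\psi_\eps,\tilde\psi_\eps,Y_\eps)$; instead it keeps $\psi_\eps+e^{t\lL_\eps}f_\eps$ together, writes $g_\eps^{(k)}=(\psi_\eps+e^{t\lL_\eps}f_\eps)^{\diamond(2n-1-k)}$, and proves one differential inequality (Lemma~\ref{lem:differential_inequality}) that applies verbatim to both $v_\eps=u_\eps$ (with $f_\eps=0$) and $v_\eps=Y_\eps$ (with $f_\eps=u_\eps(0)$). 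This avoids having to estimate the positive-regularity norm of the product $\tilde\psi_\eps^{\,b}Y_\eps^{\,c+p-1}$ directly. Second, for the global-in-time bound \eqref{eq:bounds_Y_u_global} the paper takes a slightly different path: it first runs the ODE comparison on the $u_\eps$-equation, whose right-hand side involves only the \emph{stationary} quantities $\|\psi_\eps^{\diamond k}\|_{\cC^{-\kappa}}$, takes expectation and applies Jensen to obtain a bound on $\E\|u_\eps(t)\|_{L^{2m}}^{2m}$ that is \emph{independent of the initial datum}; the global bound for $Y_\eps$ then follows for $t\ge 1$ from $\|Y_\eps(t)\|\le\|u_\eps(t)\|+\|e^{t\lL_\eps}u_\eps(0)\|$. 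Your route---running the comparison directly on the $Y_\eps$ inequality---requires handling a right-hand side $\Phi_\eps(t)$ that is not stationary and blows up at $t=0$, so the ``standard comparison argument'' needs the extra step of taking expectation first (via Jensen, since $\gamma>1$) and then splitting $t\le 1$ versus $t\ge 1$; this works but is less direct.
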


\subsection{Local existence}

Let $T>0$ be fixed, and $\{g_\eps^{(k)}\}$ be a class of space-time distributions such that
\begin{equation*}
    \mM_\eps := \max_{0 \leq k \leq 2n-1} \sup_{t \in [0,T]} \big( t^{\frac{1}{5}} \|g_\eps^{(k)}(t)\|_{\cC^{-\kappa}} \big) +\infty\;.
\end{equation*}
In our situation, we use
\begin{equation*}
    g_{\eps}^{(k)}(t) = \big( \psi_\eps(t) + e^{t \lL_\eps} u_\eps(0) \big)^{\diamond (2n-1-k)}\;,
\end{equation*}
although in what follows it can mostly be regarded as general space-time distributions satisfying the bound above. Let $\xX_\tau$ be the norm defined by
\begin{equation*}
    \|Y\|_{\xX_\tau} := \sup_{t \in [0,\tau]} \|Y(t)\|_{W_0^{1,+\infty}(\dD)}\;.
\end{equation*}
We have the following theorem regarding local existence of $Y_\eps$. 

\begin{thm} \label{thm:local_existence}
    For every $\eps>0$, there exists $\tau_\eps > 0$ depending on $\mM_\eps$ only such that there is a unique $Y_\eps \in \xX_{\tau_\eps}$ such that
    \begin{equation} \label{eq:remainder_Duhamel}
        Y_\eps(t) = - \sum_{k=0}^{2n-1} \begin{pmatrix} 2 n - 1 \\ k \end{pmatrix} \int_{0}^{t} e^{(t-r)\lL_\eps} \big( g_{\eps}^{(k)}(r) Y_{\eps}^{k}(r) \big) {\rm d}r\;.
    \end{equation}
    Furthermore, if $T_\eps > 0$ and $Y_{\eps}^{(1)}$ and $Y_{\eps}^{(2)}$ are two solutions satisfying \eqref{eq:remainder_Duhamel} on $[0,T_\eps)$, then $Y_{\eps}^{(1)} = Y_{\eps}^{(2)}$ on $[0,T_\eps)$. 
\end{thm}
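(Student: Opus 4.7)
The plan is a standard Picard/contraction argument in the Banach space $\xX_{\tau}$. I will define the Duhamel fixed-point operator
\begin{equation*}
    \Phi(Y)(t) := - \sum_{k=0}^{2n-1} \begin{pmatrix} 2n-1 \\ k \end{pmatrix} \int_{0}^{t} e^{(t-r) \lL_\eps} \big( g_{\eps}^{(k)}(r) Y^{k}(r) \big) \, {\rm d}r,
\end{equation*}
and then show that for $\tau_\eps > 0$ small enough in terms of $\eps$ and $\mM_\eps$, $\Phi$ is a strict contraction on a closed ball of radius $R$ (say $R=1$) in $\xX_{\tau_\eps}$; Banach's fixed-point theorem then produces the unique solution $Y_\eps$.

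The two core estimates are (i) a product bound and (ii) a semigroup smoothing bound. For (i), since $W_{0}^{1,\infty}(\dD) \hookrightarrow \cC^{1-\kappa}(\dD)$ and pointwise multiplication is a continuous bilinear map $\cC^{1-\kappa}(\dD) \times \cC^{-\kappa}(\dD) \to \cC^{-\kappa}(\dD)$ whenever $\kappa < 1/2$, combined with the elementary chain-rule bound $\|Y^k\|_{W_0^{1,\infty}} \lesssim \|Y\|_{W_0^{1,\infty}}^{k}$, the hypothesis on $g_\eps^{(k)}$ yields
\begin{equation*}
    \|g_\eps^{(k)}(r) Y^{k}(r)\|_{\cC^{-\kappa}(\dD)} \lesssim r^{-1/5} \, \mM_\eps \, \|Y(r)\|_{W_0^{1,\infty}(\dD)}^{k}.
\end{equation*}
For (ii), for each fixed $\eps > 0$ the Green-function estimates collected in the appendix should give
\begin{equation*}
    \|e^{s \lL_\eps} h\|_{W_0^{1,\infty}(\dD)} \lesssim_{\eps} s^{-(1+\kappa)/2} \, \|h\|_{\cC^{-\kappa}(\dD)}, \qquad s \in (0,T],
\end{equation*}
where the implicit constant is allowed to depend on $\eps$ and $T$. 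Inserting both into $\Phi$ and using the beta-type bound $\int_{0}^{t} (t-r)^{-(1+\kappa)/2} r^{-1/5} \, {\rm d}r \lesssim t^{3/10 - \kappa/2}$, one gets
\begin{equation*}
    \|\Phi(Y)\|_{\xX_\tau} \lesssim_{\eps} \mM_\eps \, (1 + R^{2n-1}) \, \tau^{3/10 - \kappa/2} \qquad \text{whenever } \|Y\|_{\xX_\tau} \leq R.
\end{equation*}
Since $\kappa$ is small, the exponent $3/10 - \kappa/2$ is strictly positive, so taking $R=1$ and then shrinking $\tau_\eps$ appropriately in terms of $\eps$ and $\mM_\eps$ gives the self-mapping property.

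The contraction property follows from the same two estimates combined with the algebraic identity $Y_1^{k} - Y_2^{k} = (Y_1 - Y_2) \sum_{j=0}^{k-1} Y_1^{j} Y_2^{k-1-j}$, leading to
\begin{equation*}
    \|\Phi(Y_1) - \Phi(Y_2)\|_{\xX_\tau} \lesssim_{\eps} \mM_\eps \, Q(R) \, \tau^{3/10 - \kappa/2} \, \|Y_1 - Y_2\|_{\xX_\tau}
\end{equation*}
for a polynomial $Q$; a final shrinking of $\tau_\eps$ forces the prefactor below $1$. For the uniqueness assertion on a general horizon $[0,T_\eps)$, the same contraction bound applied to two a priori given solutions forces them to agree on $[0,\tau_\eps]$; then, using their common value at $\tau_\eps$ as a restarted initial condition and iterating, uniqueness propagates to all of $[0,T_\eps)$.

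The main obstacle I anticipate is the $\eps$-dependent smoothing estimate (ii) from $\cC^{-\kappa}$ into $W_0^{1,\infty}$ while respecting the Dirichlet boundary condition: the pointwise and gradient Green-function bounds quoted from the appendix are the natural ingredients, but translating them into the displayed operator inequality (and checking the boundary-trace compatibility required by the choice of $W_0^{1,\infty}$) is the one step that needs care. Once (i) and (ii) are in place, the remaining work is routine Picard iteration.
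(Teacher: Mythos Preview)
Your overall strategy is exactly the paper's: define the Duhamel map, use a product bound $\cC^{-\kappa}\times\cC^{\alpha}\to\cC^{-\kappa}$ together with a $\cC^{-\kappa}\to W_0^{1,\infty}$ smoothing bound for the semigroup, and close a contraction on the unit ball of $\xX_\tau$. The paper even uses the cruder exponent $(t-r)^{-3/5}$ in place of your $(t-r)^{-(1+\kappa)/2}$, arriving at $\|\Gamma_\eps Y\|_{\xX_\tau}\lesssim \tau^{1/5}(1+\mM_\eps)^{2n-1}$ and taking $\tau\lesssim(1+\mM_\eps)^{-5(2n-1)}$.

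There is, however, one genuine gap. The theorem asserts that $\tau_\eps$ depends on $\mM_\eps$ \emph{only}; this is the whole point of the statement (see the remark immediately following it). You explicitly allow the smoothing constant in (ii) to depend on $\eps$, write $\lesssim_\eps$ throughout, and then choose $\tau_\eps$ ``in terms of $\eps$ and $\mM_\eps$''. As written, your argument therefore does not prove the theorem as stated. The fix is painless but must be made explicit: the Green-function bounds \eqref{eq:green_pointwise} and \eqref{eq:green_gradient_pointwise} in the appendix are \emph{uniform} in $\eps\in[0,1]$, and from them one obtains
\[
\|e^{s\lL_\eps}h\|_{W_0^{1,\infty}(\dD)}\lesssim s^{-(1+\kappa)/2}\|h\|_{\cC^{-\kappa}(\dD)}
\]
with a constant independent of $\eps$ (the Dirichlet trace is inherited from the Green function). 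Once you drop the $\eps$-subscript on your implicit constants, your $\tau_\eps$ becomes a function of $\mM_\eps$ alone, and the proof goes through. For the uniqueness on $[0,T_\eps)$ the paper argues by contradiction via the maximal agreement time rather than by restarting; your restart scheme also works, but note that the step size at each restart must be chosen using the $\xX_{T_\eps}$-norms of the two given solutions (which enter through your polynomial $Q$), not just $\mM_\eps$.
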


\begin{rmk}
    Note that the local existence time $\tau_\eps$ depends on $\mM_\eps$ only. In particular, it will be independent of $\eps$ if $\mM_\eps$ is. Also, the second statement is about uniqueness up to arbitrary fixed time $T_\eps$ (possibly beyond the local existence time $\tau_\eps$). 
\end{rmk}

\begin{proof} [Proof of Theorem~\ref{thm:local_existence}]
    Define the operator $\Gamma_\eps$ by
    \begin{equation*}
        (\Gamma_\eps Y)(t) := - \sum_{k=0}^{2n-1} \begin{pmatrix} 2 n - 1 \\ k \end{pmatrix} \int_{0}^{t} e^{(t-r)\lL_\eps} \big( g_{\eps}^{(k)}(r) Y^{k}(r) \big) {\rm d}r\;.
    \end{equation*}
    We first show that, for sufficiently small $\tau$ depending on $\mM_\eps$ only, $\Gamma_\eps$ is a map from a ball of radius $1$ in $\xX_\tau$ into itself. Indeed, by \ref{eq:semi_group_derivative}, we have
    \begin{equation*}
        \|\nabla (\Gamma_\eps Y)(t)\|_{L^\infty} \lesssim \int_{0}^{t} (t-r)^{-\frac{3}{5}} \|g_\eps^{(k)}(r)\|_{\cC^{-\kappa}} \|Y(r)\|_{\cC^{2\kappa}}^{k} {\rm d}r\;.
    \end{equation*}
    Since $\Gamma_{\eps} Y=0$ on $\d \dD$, we get
    \begin{equation*}
        \|\Gamma_\eps Y\|_{\xX_\tau} \lesssim \big(1 + \mM_\eps\big)^{2n-1} \sup_{t \in [0,\tau]} \int_{0}^{t} (t-r)^{-\frac{3}{5}} r^{-\frac{1}{5}} {\rm d}r \lesssim \tau^{\frac{1}{5}} (1 + \mM_\eps)^{2n-1}\;.
    \end{equation*}
    Taking $\tau \lesssim (1 + \mM_\eps)^{-5(2n-1)}$ (with a sufficiently small proportionality constant), we see that $\Gamma_\eps$ maps the unit ball of $\xX_\tau$ into itself. 

    To show $\Gamma_\eps$ is also a contraction, we first have
    \begin{equation*}
        \begin{split}
        \|(\Gamma_\eps Y_1)(t) - (\Gamma_\eps Y_2)(t)\|_{W^{1,\infty}} \lesssim \sum_{k=1}^{2n-1} \int_{0}^{t} &(t-r)^{-\frac{3}{5}} \|g_{\eps}^{(k)}(r)\|_{\cC^{-\kappa}} \|Y_1(r) - Y_{2}(r)\|_{\cC^{2 \kappa}}\\
        &\big( \|Y_1(r)\|_{\cC^{2\kappa}}^{k-1} + \|Y_2(r)\|_{\cC^{2\kappa}}^{k-1} \big) {\rm d}r\;.
        \end{split}
    \end{equation*}
    Again, since we are restricting $\Gamma_{\varepsilon}$ to the unit ball of $\xX_{\tau}$, we have
    \begin{equation*}
        \|\Gamma_\eps (Y_1) - \Gamma_\eps(Y_2)\|_{\xX_{\tau}} \lesssim \tau^{\frac{1}{5}} (1 + \mM_\eps)^{2n-1} \|Y_1 - Y_2\|_{\xX_{\tau}}. 
    \end{equation*}
    Taking $\tau \lesssim (1 + \mM_\eps)^{-5(2n-1)}$ makes $\Gamma_\eps$ a contraction. This gives the unique local existence up to time $\tau_\eps$. 

    Now, let $T_\eps >0$ and suppose $Y_\eps^{(1)}$ and $Y_\eps^{(2)}$ are two solutions to \eqref{eq:remainder_Duhamel} in $\xX_{T_\eps}$. If they are not identical, then let
    \begin{equation*}
        \sigma_\eps := \sup \big\{ r \in [0,T_\eps): Y_{\eps}^{(1)}(s) = Y_{\eps}^{(2)}(s) \; \text{for all} \; s \in [0,r] \big\}
    \end{equation*}
    be the maximal time up to which two solutions agree. By assumption, we have $\sigma_\eps < T_\eps$. Then, the argument as above shows that
    \begin{equation*}
        \|Y_\eps^{(1)}(t) - Y_\eps^{(2)}(t)\|_{W_{0}^{1,+\infty}} \lesssim (t-\sigma_\eps)^{\frac{1}{5}} (1 +\mM_\eps)^{2n-1} \sup_{r \in [\sigma_\eps, t]} \|Y_{\eps}^{(1)}(r) - Y_{\eps}^{(2)}(r)\|_{W_{0}^{1,+\infty}}\;.
    \end{equation*}
    This shows that there exists $\delta>0$ such that $Y_{\eps}^{(1)} = Y_{\eps}^{(2)}$ on $[\tau_\eps, \tau_\eps + \delta]$, contradicting the definition of $\tau_\eps$. This completes the proof of the theorem. 
\end{proof}

\subsection{A priori bounds}

\begin{lem} \label{lem:differential_inequality}
    Suppose $\{v_\eps\}$ is a family of space-time processes satisfying (pathwise) the equation
    \begin{equation*}
        \d_t v_\eps = \lL_\eps v_\eps - (\psi_\eps + e^{t \lL_\eps} f_\eps + v_\eps)^{\diamond (2n-1)}
    \end{equation*}
    on $[0,T]$, where $\{f_\eps\}$ is a family of spatial processes on $\dD$. Then for every $1 \leq k \leq 2n-1$, there exists $\gamma(k) \geq 1$ such that for every $m \geq 1$, we have
    \begin{equation*}
        \begin{split}
        &\phantom{11}\frac{{\rm d}}{{\rm d}t} \big(\|v_\eps(t)\|_{L^{2m}}^{2m} \big) + \|\nabla (v_\eps^m(t))\|_{L^2}^2 + \|v_\eps(t)\|_{L^{2n+2m-2}}^{2n+2m-2}\\
        &\lesssim \sum_{k=1}^{2n-1} \|(\psi_\eps(t) + e^{t \lL_\eps} f_\eps)^{\diamond k}\|_{\cC^{-\kappa}}^{\gamma(k)}\;,
        \end{split}
    \end{equation*}
    where the proportionality constant is independent of $\eps \in (0,1)$ and $t \geq 0$. 
\end{lem}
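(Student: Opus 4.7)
The plan is to carry out the standard $L^{2m}$ energy estimate for $v_\eps$, but treating the Wick nonlinearity $(\psi_\eps+e^{t\lL_\eps}f_\eps+v_\eps)^{\diamond(2n-1)}$ with the Gaussian structure of $\psi_\eps$. First, testing the evolution equation against $v_\eps^{2m-1}$ and integrating by parts,
\begin{equation*}
  \frac{1}{2m}\frac{{\rm d}}{{\rm d}t}\|v_\eps(t)\|_{L^{2m}}^{2m}
  = -(2m-1)\int_\dD v_\eps^{2m-2}\,\nabla v_\eps\cdot a(\cdot/\eps)\nabla v_\eps\,{\rm d}x
   -\int_\dD v_\eps^{2m-1}(\psi_\eps+e^{t\lL_\eps}f_\eps+v_\eps)^{\diamond(2n-1)}\,{\rm d}x.
\end{equation*}
Uniform ellipticity of $a$, together with the identity $v_\eps^{2m-2}|\nabla v_\eps|^2=m^{-2}|\nabla(v_\eps^m)|^2$, turns the first term into a bound of the form $-c_m\|\nabla(v_\eps^m)\|_{L^2}^2$ with a constant independent of $\eps$. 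This is the only place where the oscillating operator enters, and uniformity in $\eps$ comes only from the uniform ellipticity bounds on $a$.

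Next I would expand the Wick nonlinearity. Since $\Psi_\eps(t):=\psi_\eps(t)+e^{t\lL_\eps}f_\eps$ is the Gaussian piece and $v_\eps$ is treated as deterministic under that Gaussian structure, the Wick binomial formula gives
\begin{equation*}
  (\Psi_\eps+v_\eps)^{\diamond(2n-1)}
  =\sum_{\ell=0}^{2n-1}\binom{2n-1}{\ell}\Psi_\eps^{\diamond(2n-1-\ell)}\,v_\eps^{\ell}.
\end{equation*}
The term $\ell=2n-1$ contributes exactly $\|v_\eps\|_{L^{2n+2m-2}}^{2n+2m-2}$ with the correct sign, which is the coercive Lebesgue term on the left hand side of the claimed inequality. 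The remaining $2n-1$ terms have the form $\bigl\langle v_\eps^{2m-1+\ell},\Psi_\eps^{\diamond(2n-1-\ell)}\bigr\rangle$ with $1\le 2n-1-\ell\le 2n-1$, and these must be absorbed into the coercive terms up to a function of $\|\Psi_\eps^{\diamond k}\|_{\cC^{-\kappa}}$.

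For each such cross term, I would use the duality pairing of $\cC^{-\kappa}$ with a Besov test-function space (so the bound depends only on $\|\Psi_\eps^{\diamond(2n-1-\ell)}\|_{\cC^{-\kappa}}$ and on a smoothness norm of $v_\eps^{2m-1+\ell}$ with Dirichlet vanishing at $\d\dD$), followed by a two-dimensional Gagliardo--Nirenberg interpolation to control that smoothness norm by a small power of $\|\nabla(v_\eps^m)\|_{L^2}^2+\|v_\eps\|_{L^{2n+2m-2}}^{2n+2m-2}$ and a power of a lower $L^q$ norm of $v_\eps$ that is itself interpolated between these two. A final application of Young's inequality with sufficiently small $\delta>0$ splits each cross term into $\delta\bigl(\|\nabla v_\eps^m\|_{L^2}^2+\|v_\eps\|_{L^{2n+2m-2}}^{2n+2m-2}\bigr)$ plus $C_\delta\|\Psi_\eps^{\diamond(2n-1-\ell)}\|_{\cC^{-\kappa}}^{\gamma(2n-1-\ell)}$ for some exponent $\gamma(k)\ge 1$ determined by the interpolation ratios. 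Summing over $\ell$, choosing $\delta$ small enough to absorb all the coercive pieces into the left hand side, and relabelling $k=2n-1-\ell$ yields the stated inequality.

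The main obstacle is Step 3: choosing the interpolation exponents so that (i) every cross term is genuinely of lower order than the coercive terms $\|\nabla v_\eps^m\|_{L^2}^2$ and $\|v_\eps\|_{L^{2n+2m-2}}^{2n+2m-2}$ in two dimensions, and (ii) the constants, most importantly the $L^{2m}$--$L^{2n+2m-2}$--$H^1$ Gagliardo--Nirenberg constants, depend only on $\dD$ and on the ellipticity bounds of $a$, not on $\eps$. Subcriticality of the $\phi^{2n}_2$ model guarantees that such an interpolation exists for any $\kappa>0$ small enough (depending on $n$), which is why the exponents $\gamma(k)$ are finite but in general larger than one.
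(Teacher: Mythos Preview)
Your proposal is correct and follows essentially the same route as the paper: test against $v_\eps^{2m-1}$, use uniform ellipticity for the gradient term, expand via the Wick binomial, extract the coercive $L^{2n+2m-2}$ term, dualise the cross terms against $\cC^{-\kappa}$, interpolate, and absorb via Young. The paper makes the duality step concrete by pairing $\cC^{-\kappa}$ with $B_{1,1}^{\kappa}$ (so the cross term is bounded by $\|g_\eps^{(k)}\|_{\cC^{-\kappa}}\|\nabla(v_\eps^{2m-1+k})\|_{L^1}^{\kappa}\|v_\eps^{2m-1+k}\|_{L^1}^{1-\kappa}$) and then uses only the two-dimensional embedding $H_0^1\hookrightarrow L^p$ to close, which is exactly the interpolation you outline.
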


For application, we will consider two cases: $v_\eps = u_\eps$ with $f_\eps = 0$, and $v_\eps = Y_\eps$ with $f_\eps = u_\eps(0)$. 

\begin{proof}
    Writing
    \begin{equation*}
        g_{\eps}^{(k)}(t) = \big( \psi_{\eps}(t) + e^{t \lL_\eps} f_\eps \big)^{\diamond (2n-1-k)}
    \end{equation*}
    for simplicity, we have
    \begin{equation*}
        \d_t v_\eps - \lL_\eps v_\eps + v_{\eps}^{2n-1} = - \sum_{k=0}^{2n-2} \begin{pmatrix} 2n-1 \\ k \end{pmatrix} g_{\eps}^{(k)} v_{\eps}^{k}\;.
    \end{equation*}
    Multiplying both sides by $v_{\eps}^{2m-1}$, integrating over $x \in \dD$, and using ellipticity of $\lL_\eps$, we get
    \begin{equation} \label{eq:differential_inequality_first}
        \begin{split}
        &\phantom{111}\frac{{\rm d}}{{\rm d}t} \big(\|v_\eps(t)\|_{L^{2m}}^{2m} \big) + \|\nabla (v_\eps^m(t))\|_{L^2}^2 + \|v_\eps(t)\|_{L^{2n+2m-2}}^{L^{2n+2m-2}}\\
        &\lesssim \sum_{k=0}^{2n-2} \|g_{\eps}^{(k)}(t)\|_{\cC^{-\kappa}} \|\nabla \big( v_{\eps}^{2m-1+k}(t) \big)\|_{L^1}^{\kappa} \|v_{\eps}^{2m-1+k}(t)\|_{L^1}^{1-\kappa}\;,
        \end{split}
    \end{equation}
    where we have used Proposition~\ref{prop:B11_bound} and that $v_{\eps}|_{\d \dD} = 0$ in the last inequality. Note that it is important here that the sum over $k$ on the right hand side is up to $2n-2$ rather than $2n-1$. 

    For the term involving the gradient, we have
    \begin{equation*}
        \|\nabla (v_{\eps}^{2m-1+k})\|_{L^1} \lesssim \|\nabla (v_\eps^m) \cdot v_{\eps}^{m-1+k}\|_{L^1} \lesssim \|\nabla (v_{\eps}^{m})\|_{L^2} \|v_{\eps}^{m-1+k}\|_{L^2}\;.
    \end{equation*}
    For the last term on the right hand side, we have
    \begin{equation*}
        \|v_{\eps}^{m-1+k}\|_{L^2} = \|v_{\eps}^{m}\|_{L^{\frac{2(m-1+k)}{m}}}^{1+\frac{k-1}{m}} \lesssim \|\nabla (v_{\eps}^m)\|_{L^2}^{1 + \frac{k-1}{m}}\;,
    \end{equation*}
    where in the last inequality we used the embedding of $H^1$ into $L^p$ for any $p < +\infty$ in dimension two. Hence, we can bound each term in the sum on the right hand side of \eqref{eq:differential_inequality_first} by
    \begin{equation*}
        \begin{split}
        &\phantom{111}\|g_{\eps}^{(k)}\|_{\cC^{-\kappa}} \|\nabla (v_{\eps}^m)\|_{L^2}^{(2+\frac{k-1}{m})\kappa} \|v_{\eps}\|_{L^{2m-1+k}}^{(2m-1+k)(1-\kappa)}\\
        &\leq C_{\mu} \|g_{\eps}^{(k)}\|_{\cC^{-\kappa}}^{\gamma(k)} + \mu \big( \|\nabla (v_{\eps}^m)\|_{L^2}^2 + \|v_{\eps}\|_{L^{2n+2m-2}}^{2n+2m-2} \big)
        \end{split}
    \end{equation*}
    for sufficiently small $\mu>0$, constant $C_{\mu}$ depending on $\mu$ and $k$ and $\gamma(k) > 1$. The proof is completed by replacing $2n-1-k$ by $k$, changing the range of the sum, and also ``redefining" $\gamma(k)$. 
\end{proof}

\begin{cor} \label{cor:apriori}
    Let $\gamma(k)$ be the exponents in Lemma~\ref{lem:differential_inequality}. We have
    \begin{equation*}
        \|u_\eps(t)\|_{L^{2m}}^{2m} \lesssim t^{-\frac{m}{n-1}} \vee \max_{1 \leq k \leq 2n-1} \|\psi_\eps^{\diamond k}\|_{\xX_T}^{\frac{m \gamma(k)}{m+n-1}}\;,
    \end{equation*}
    where the bound does not depend on the initial condition, and
    \begin{equation*}
        \|Y_\eps(t)\|_{L^{2m}}^{2m} \lesssim \sum_{k=1}^{2n-1} \int_{0}^{t} \big\| \big( \psi_\eps(r) + e^{r \lL_\eps} u_\eps(0)  \big)^{\diamond k} \big\|_{\cC^{-\kappa}}^{\gamma(k)} {\rm d}r\;.
    \end{equation*}
    Both proportionality constants are independent of $t$. 
\end{cor}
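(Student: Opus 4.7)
The plan is to derive both bounds by specialising Lemma~\ref{lem:differential_inequality} and then solving the resulting differential inequality in two different regimes: for $u_\eps$ we exploit the super-linear damping from the nonlinearity to obtain a bound that forgets the (possibly irregular) initial data, while for $Y_\eps$ we simply integrate since $Y_\eps(0)=0$.

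First I would specialise Lemma~\ref{lem:differential_inequality} to $v_\eps=u_\eps$ with $f_\eps=0$, which is consistent since $u_\eps=\phi_\eps-\psi_\eps$ satisfies $\d_t u_\eps = \lL_\eps u_\eps - (\psi_\eps + u_\eps)^{\diamond(2n-1)}$. Dropping the positive $\|\nabla(u_\eps^m)\|_{L^2}^2$ term and using H\"older's inequality on the bounded domain $\dD$ to pass from $L^{2n+2m-2}$ to $L^{2m}$ (which gives $\|u_\eps\|_{L^{2m}}^{2n+2m-2} \lesssim \|u_\eps\|_{L^{2n+2m-2}}^{2n+2m-2}$ since $2m \leq 2n+2m-2$), we obtain, for $X(t):=\|u_\eps(t)\|_{L^{2m}}^{2m}$ and $R(t) := \sum_{k=1}^{2n-1}\|\psi_\eps^{\diamond k}(t)\|_{\cC^{-\kappa}}^{\gamma(k)}$, the scalar differential inequality
\begin{equation*}
    X'(t) + c\,X(t)^{1+\frac{n-1}{m}} \lesssim R(t),
\end{equation*}
where $c>0$ is independent of $\eps$ and $t$.

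Next I would invoke a comparison argument for this super-linear ODE. Setting $M := \sup_{s \in [0,t]} R(s)$ and $\alpha := (n-1)/m$, one checks that whenever $X(s) \geq 2^{1/(1+\alpha)}(M/c)^{1/(1+\alpha)}$ the driving term is dominated and $X'(s) \leq -\tfrac{c}{2} X(s)^{1+\alpha}$. Integrating this pure-dissipation inequality starting from an arbitrary (possibly infinite) value yields the universal bound $X(s) \lesssim s^{-1/\alpha}$. Combining the two regimes gives
\begin{equation*}
    X(t) \lesssim t^{-1/\alpha} \vee M^{1/(1+\alpha)} = t^{-\frac{m}{n-1}} \vee \Bigl(\sup_{s\in[0,t]} R(s)\Bigr)^{\frac{m}{m+n-1}}.
\end{equation*}
Using that a maximum of finitely many nonnegative terms is equivalent (up to a constant) to their sum, and that $(a\vee b)^p = a^p \vee b^p$, the last expression is controlled by $\max_{1\leq k\leq 2n-1}\|\psi_\eps^{\diamond k}\|_{\xX_T}^{m\gamma(k)/(m+n-1)}$, which is the first claimed bound.

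For the bound on $Y_\eps$, I apply Lemma~\ref{lem:differential_inequality} with $v_\eps=Y_\eps$ and $f_\eps=u_\eps(0)$; one readily verifies that $Y_\eps$ solves the required equation. Since $Y_\eps(0)=0$, we simply drop all positive terms from the left-hand side except $\tfrac{d}{dt}\|Y_\eps\|_{L^{2m}}^{2m}$ and integrate from $0$ to $t$, obtaining
\begin{equation*}
    \|Y_\eps(t)\|_{L^{2m}}^{2m} \lesssim \sum_{k=1}^{2n-1}\int_0^t \bigl\|(\psi_\eps(r)+e^{r\lL_\eps}u_\eps(0))^{\diamond k}\bigr\|_{\cC^{-\kappa}}^{\gamma(k)}\,{\rm d}r,
\end{equation*}
as stated. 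The only genuine difficulty is the super-linear ODE comparison in the first bound: one must be careful to verify that the comparison argument produces the $t^{-m/(n-1)}$ rate independently of the initial value $\|u_\eps(0)\|_{L^{2m}}$, which is what allows the resulting estimate to make sense even for distributional initial data.
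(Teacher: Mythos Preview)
Your proposal is correct and follows the same route as the paper. The paper's proof simply invokes Lemma~\ref{lem:differential_inequality} with the same two specialisations you make and then cites \cite[Lemma~3.8]{Weber2018} for the super-linear ODE comparison, which is precisely the argument you spell out by hand; the $Y_\eps$ bound is obtained identically by direct integration from $Y_\eps(0)=0$.
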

\begin{proof}
    The first one follows from applying Lemma~\ref{lem:differential_inequality} with $v_\eps = u_\eps$ and $f_\eps  = 0$ and then \cite[Lemma~3.8]{Weber2018}. The second one is a direct consequence of Lemma~\ref{lem:differential_inequality} with $v_\eps=Y_\eps$ and $f_\eps = u_\eps(0)$ and noting that $Y_\eps(0) = 0$ in this case. 
\end{proof}

\begin{proof} [Proof of \eqref{eq:bound_u} and \eqref{eq:bound_Y}]
    The bounds \eqref{eq:bound_u} and \eqref{eq:bound_Y} follow directly from taking expectation in the bounds in Corollary~\ref{cor:apriori} and using Theorem~\ref{thm:linear_stationary}. 
\end{proof}

\subsection{Long time probabilistic bounds}

\begin{lem} \label{le: global_energy_estimate}
    For every $p \geq 1$, we have
    \begin{equation*}
        \sup_{\eps \in [0,1]} \sup_{t \in \R^+} \E \|Y_\eps(t)\|_{L^p}^p <+\infty\;.
    \end{equation*}
\end{lem}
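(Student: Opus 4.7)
By the nesting $L^{2m}(\dD) \subset L^p(\dD)$ for $2m \geq p$ on the bounded domain $\dD$, it suffices to prove the bound for $p = 2m$ with $m \in \NN$. The case $t \in [0,1]$ is already contained in \eqref{eq:bound_Y} applied with $T=1$, so the task reduces to showing
\begin{equation*}
    \sup_{\eps \in [0,1]} \sup_{t \geq 1} \E \|Y_\eps(t)\|_{L^{2m}}^{2m} < \infty.
\end{equation*}

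Setting $E(s) := \|Y_\eps(s)\|_{L^{2m}}^{2m}$, I would apply Lemma~\ref{lem:differential_inequality} with $v_\eps = Y_\eps$ and $f_\eps = u_\eps(0)$, and use H\"older on $\dD$ (which gives $\|Y_\eps\|_{L^{2n+2m-2}}^{2n+2m-2} \gtrsim \|Y_\eps\|_{L^{2m}}^{2n+2m-2} = E^{(n+m-1)/m}$) to obtain the pathwise differential inequality
\begin{equation*}
    \dot E(s) + c\, E(s)^{\frac{n+m-1}{m}} \lesssim G_\eps(s), \qquad G_\eps(s) := \sum_{k=1}^{2n-1} \|(\psi_\eps(s) + e^{s\lL_\eps} u_\eps(0))^{\diamond k}\|_{\cC^{-\kappa}}^{\gamma(k)}.
\end{equation*}
Since $(n+m-1)/m \geq 1$, the elementary inequality $x^a \geq x - 1$ for $x \geq 0,\, a \geq 1$ upgrades this to $\dot E + c E \lesssim 1 + G_\eps$. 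Gronwall on $[1,t]$ followed by taking expectation yields
\begin{equation*}
    \E E(t) \leq e^{-c(t-1)}\, \E E(1) + C \int_1^t e^{-c(t-s)} \big(1 + \E G_\eps(s)\big)\, ds.
\end{equation*}
The first term is uniformly bounded in $\eps$ by \eqref{eq:bound_Y}, so the proof reduces to establishing $\sup_{\eps \in [0,1]} \sup_{s \geq 1} \E G_\eps(s) < \infty$; restricting to $s \geq 1$ is precisely what avoids the $s \to 0^+$ blow-up of $\|e^{s\lL_\eps} u_\eps(0)\|_{L^\infty}$ coming from the low regularity of $u_\eps(0)$.

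For the uniform bound on $\E G_\eps(s)$ at $s \geq 1$, I would expand the Wick product by the binomial formula
\begin{equation*}
    (\psi_\eps(s) + e^{s\lL_\eps} u_\eps(0))^{\diamond k} = \sum_{j=0}^{k} \binom{k}{j}\, \psi_\eps^{\diamond j}(s)\, (e^{s\lL_\eps} u_\eps(0))^{k-j},
\end{equation*}
and invoke a Bony-type product estimate $\|fg\|_{\cC^{-\kappa}} \lesssim \|f\|_{\cC^{-\kappa}} \|g\|_{\cC^\alpha}$ for some $\alpha > \kappa$. From the appendix estimates on $G_\eps$ (De~Giorgi--Nash--Moser type H\"older regularity for the divergence-form semigroup together with a uniform spectral gap for the Dirichlet realisation), $e^{s\lL_\eps}\colon \cC^{-\beta}(\dD) \to \cC^\alpha(\dD)$ is bounded uniformly in $\eps \in [0,1]$ and $s \geq 1$, so $\|(e^{s\lL_\eps} u_\eps(0))^{k-j}\|_{\cC^\alpha} \lesssim \|u_\eps(0)\|_{\cC^{-\beta}}^{k-j}$. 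Since $u_\eps(0) = \phi_\eps(0) - \psi_\eps(0)$ with $\phi_\eps(0) \to \phi_0(0)$ in $\cC^{-\beta}$ and $\sup_\eps \E \|\psi_\eps(0)\|_{\cC^{-\kappa}}^p < \infty$ by Theorem~\ref{thm:linear_stationary}, one obtains $\sup_\eps \E \|u_\eps(0)\|_{\cC^{-\beta}}^p < \infty$ for every $p \geq 1$. On the Gaussian side, stationarity of $\psi_\eps$ and Theorem~\ref{thm:linear_stationary} give $\sup_\eps \sup_s \E \|\psi_\eps^{\diamond j}(s)\|_{\cC^{-\kappa}}^p < \infty$, and H\"older's inequality across each summand closes the uniform bound on $\E G_\eps(s)$. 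The main obstacle is precisely the uniform-in-$\eps$ H\"older regularisation of $e^{s\lL_\eps}$ on rough distributional data, which must be extracted carefully from the Green function bounds in the appendix; without it, the mixed product in the Wick-expanded forcing cannot be controlled in the translation-invariant space $\cC^{-\kappa}$ uniformly in the oscillation parameter.
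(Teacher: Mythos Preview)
Your argument is correct, but it takes a genuinely different route from the paper. The paper applies Lemma~\ref{lem:differential_inequality} with $v_\eps = u_\eps$ and $f_\eps = 0$, so the forcing on the right-hand side is simply $\sum_k \|\psi_\eps^{\diamond k}(t)\|_{\cC^{-\kappa}}^{\gamma(k)}$, which is stationary in $t$ with moments bounded uniformly in $\eps$. Taking expectation and invoking the ``coming down from infinity'' lemma \cite[Lemma~3.8]{Weber2018} gives $\E\|u_\eps(t)\|_{L^{2m}}^{2m} \lesssim t^{-m/(n-1)} \vee 1$, a bound independent of the initial data; the estimate for $Y_\eps$ then follows from the identity $Y_\eps(t) = u_\eps(t) - e^{t\lL_\eps} u_\eps(0)$ and elementary smoothing of the semigroup into $L^{2m}$.

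You instead run the differential inequality directly on $Y_\eps$, which produces the non-stationary forcing $G_\eps(s)$ containing $e^{s\lL_\eps} u_\eps(0)$. You then linearise the superlinear damping and Gronwall from $s=1$, which trades the coming-down-from-infinity mechanism for control of $\E E(1)$ (already available from \eqref{eq:bound_Y}) together with $\sup_{s\geq 1}\E G_\eps(s) < \infty$. The latter requires the binomial expansion of the mixed Wick power, a paraproduct bound, and the uniform-in-$\eps$ H\"older regularisation $e^{s\lL_\eps}\colon \cC^{-\beta}\to\cC^{\alpha}$ for $s\geq 1$ that you correctly flag as the crux. The paper's route is lighter---its forcing is stationary and needs no product estimate or H\"older-level semigroup smoothing---while yours avoids the appeal to \cite[Lemma~3.8]{Weber2018} at the price of the extra semigroup ingredient, which is indeed extractable from the Green function bounds \eqref{eq:green_gradient_pointwise}, \eqref{eq:green_gradient_gradient_pointwise} and the spectral gap \eqref{eq:semi_group_operator_bound}.
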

\begin{proof}
    It suffices to consider $p=2m$ for arbitrary integer $m$. We first derive a bound for $u_\eps (t)$ for $t \geq 1$. Taking $v_\eps = u_\eps$ in Lemma~\ref{lem:differential_inequality} (and hence $f_\eps = 0$), we get
    \begin{equation*}
        \frac{{\rm d}}{{\rm d}t} \|u_\eps(t)\|_{L^{2m}}^{2m} + \|u_\eps(t)\|_{L^{2n+2m-2}}^{2n+2m-2} \lesssim \sum_{k=1}^{2n-1} \|\psi_{\eps}^{\diamond k}(t)\|_{\cC^{-\kappa}}^{\gamma (k)}\;.
    \end{equation*}
    Taking expectation on both sides, using Jensen's inequality for the second term on the left hand side, and noting $\psi_\eps$ is stationary in time, we get
    \begin{equation*}
        \frac{{\rm d}}{{\rm d}t} \big( \E \|u_\eps(t)\|_{L^{2m}}^{2m} \big) + \big(\E\|u_\eps(t)\|_{L^{2m}}^{2m} \big)^{1 +\frac{n-1}{m}} \leq \Lambda
    \end{equation*}
    for some constant $\Lambda > 0$ depending on $\kappa$, $m$ and $n$, but independent of $t \in \R^{+}$. By \cite[Lemma~3.8]{Weber2018}, we get
    \begin{equation} \label{eq:global_u}
        \E \|u_\eps(t)\|_{L^{2m}}^{2m} \lesssim t^{-\frac{m}{n-1}} \vee 1\;,
    \end{equation}
    where the proportionality constant depends on $m$, $n$ and $\kappa$, but is independent of $t$, $\eps$, and the initial condition $u_\eps(0)$. 

    As for $Y_\eps$, we have
    \begin{equation*}
        \|Y_\eps(t)\|_{L^{2m}} = \|u_\eps(t) - e^{t \lL_\eps} u_\eps(0)\|_{L^{2m}} \lesssim \|u_\eps(t)\|_{L^{2m}} + t^{-3\kappa} \|u_\eps(0)\|_{\cC^{-2\kappa}}\;.
    \end{equation*}
    Combining with \eqref{eq:global_u}, we obtain
    \begin{equation*}
        \sup_{\eps \in [0,1]} \sup_{t \geq 1} \E \|Y_\eps(t)\|_{L^{2m}}^{2m} < +\infty\;.
    \end{equation*}
    The bound for $\E \|Y_\eps(t)\|_{L^{2m}}^{2m}$ for $t \in [0,1]$ follows from Corollary~\ref{cor:apriori}. The proof is then complete. 
\end{proof}

In fact, what we get in the second part of the proof can be $\E\underset{t\in[0,1]}{\sup}||Y_{\eps}(t)||_{L^{2m}(\dD)}^{2m}<\infty$, $\forall m\in \mathbb{N}$.

\begin{lem} \label{le:global_bound_derivative}
For every $p\geq 2$, we have
\begin{equation}\label{eq: Uniform bound for Y eps}
    \begin{split}
        \sup_{\eps \in (0,1)} \sup_{t \in \R^+} \E \| Y_{\eps}(t)\|_{W^{1,+\infty}(D)}^p < \infty\;.
    \end{split}
\end{equation}
\end{lem}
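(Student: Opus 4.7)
The plan is to upgrade the uniform $L^p$ bound of Lemma~\ref{le: global_energy_estimate} to a uniform $W^{1,\infty}$ bound via a shift-and-Duhamel argument. Fix $t\ge 1$ and consider the shifted remainder
\[
v_\eps(s) := u_\eps(t-1+s) - e^{s\lL_\eps}u_\eps(t-1),\qquad s\in[0,1],
\]
which satisfies $v_\eps(0)=0$ and a Duhamel formula analogous to \eqref{eq:remainder_Duhamel}, with $\tilde g_\eps^{(j)}(r):=(\psi_\eps(t-1+r)+e^{r\lL_\eps}u_\eps(t-1))^{\diamond(2n-1-j)}$ in place of $g_\eps^{(j)}$. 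A short computation (the same Wick binomial identity used to derive the Duhamel for $Y_\eps$) gives $Y_\eps(t)=e^{\lL_\eps}Y_\eps(t-1)+v_\eps(1)$, so it suffices to bound the two summands separately.

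For the first summand the semigroup smoothing $\|e^{\lL_\eps}f\|_{W^{1,\infty}}\lesssim\|f\|_{L^q}$ at unit time (with $q$ sufficiently large, in dimension two) and Lemma~\ref{le: global_energy_estimate} immediately yield a uniform bound on $\E\|e^{\lL_\eps}Y_\eps(t-1)\|_{W^{1,\infty}}^p$. For the second summand I would mimic the contraction argument from the proof of Theorem~\ref{thm:local_existence}, now leveraging the global $L^q$ control of $u_\eps$ instead of a short time window. Combining the derivative semigroup bound $\|\nabla e^{s\lL_\eps}h\|_{L^\infty}\lesssim s^{-3/5}\|h\|_{\cC^{-\kappa}}$, the multiplication inequality $\|\tilde g_\eps^{(j)}(r)\,v_\eps^j(r)\|_{\cC^{-\kappa}}\lesssim\|\tilde g_\eps^{(j)}(r)\|_{\cC^{-\kappa}}\,\|v_\eps(r)\|_{\cC^{2\kappa}}^j$, and a Gagliardo--Nirenberg interpolation
\[
\|v_\eps(r)\|_{\cC^{2\kappa}}\lesssim\|v_\eps(r)\|_{L^q}^{1-\alpha}\,\|v_\eps(r)\|_{W^{1,\infty}}^{\alpha},
\]
with $\alpha\to 2\kappa$ as $q\to\infty$, one obtains a pathwise self-referential bound
\[
\Theta_\eps\;\le\;C\,R_\eps\,\Theta_\eps^{(2n-1)\alpha},\qquad\Theta_\eps:=\sup_{s\in[0,1]}\|v_\eps(s)\|_{W^{1,\infty}},
\]
where $R_\eps$ is an explicit random quantity built from the time integral of $(1-r)^{-3/5}\sum_j\|\tilde g_\eps^{(j)}(r)\|_{\cC^{-\kappa}}\,\|v_\eps(r)\|_{L^q}^{j(1-\alpha)}$.

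Since $\kappa<\tfrac{1}{100 n}$, one can arrange $(2n-1)\alpha<1$, so Young's inequality gives $\Theta_\eps\lesssim R_\eps^{1/(1-(2n-1)\alpha)}$. Moments of $R_\eps$ are uniform in $t$ and $\eps$: moments of $\|\tilde g_\eps^{(j)}(r)\|_{\cC^{-\kappa}}$ follow from Theorem~\ref{thm:linear_stationary} after expanding the Wick power of the sum via the binomial identity into Wick powers of $\psi_\eps(t-1+r)$ times ordinary powers of $e^{r\lL_\eps}u_\eps(t-1)$ (the latter controlled by parabolic smoothing from the $L^q$ moment of $u_\eps(t-1)$), and moments of $\|v_\eps(r)\|_{L^q}$ follow from Lemma~\ref{le: global_energy_estimate} applied to $u_\eps$ combined with the contractivity $\|e^{r\lL_\eps}u_\eps(t-1)\|_{L^q}\le\|u_\eps(t-1)\|_{L^q}$. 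The range $t\in[0,1]$ is handled identically with the choice $t_0=0$, where $v_\eps=Y_\eps$.

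The main obstacle is the Young-inequality closure: after all the interpolations, the self-referential exponent $(2n-1)\alpha$ must be strictly less than one uniformly, which forces both $\kappa$ and $1/q$ to be small, amply permitted by the standing assumption $\kappa<\tfrac{1}{100 n}$. A secondary technical point is the non-Gaussian additive perturbation $e^{r\lL_\eps}u_\eps(t-1)$ inside the Wick products $\tilde g_\eps^{(j)}$; the binomial expansion reduces the estimate to genuine Wick powers of $\psi_\eps$ (covered by Section~\ref{sec:linear}) multiplied by ordinary powers of $e^{r\lL_\eps}u_\eps(t-1)$, which are in turn controlled by parabolic smoothing and the $L^q$ moment bounds already at hand.
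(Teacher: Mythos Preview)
Your proposal is correct and leads to the same endgame (a self-referential inequality closed by interpolation with small exponent), but the route differs from the paper's in two respects.

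\emph{Decomposition.} You restart the Duhamel formula on a unit window via $Y_\eps(t)=e^{\lL_\eps}Y_\eps(t-1)+v_\eps(1)$, bound the first piece by one step of smoothing from the $L^q$ moments of Lemma~\ref{le: global_energy_estimate}, and run the bootstrap only on $v_\eps$ over $[0,1]$. The paper instead keeps the Duhamel integral over the full interval $[0,t]$ and obtains uniformity in $t$ from the exponential factor $e^{-c((t-r)+(k_2+k_3)r)}$ in the kernel $f_{k_2,k_3}$ of \eqref{eq:f_kernel}, which makes $\|f_{k_2,k_3}(t,\cdot)\|_{L^1_{[0,t]}}$ bounded independently of $t$. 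Your shift-and-restart avoids any appeal to spectral-gap decay, at the price of having to control the ``new initial datum'' $u_\eps(t-1)$; the paper trades this for a direct use of the decay built into the Dirichlet semigroup.

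\emph{Order of closure.} You close the self-referential bound $\Theta_\eps\le CR_\eps\,\Theta_\eps^{(2n-1)\alpha}$ pathwise and only afterwards take moments of $R_\eps^{1/(1-(2n-1)\alpha)}$; the paper takes $L_\omega^p$ first (via H\"older) and closes in the mixed norm $L_t^\infty L_\omega^p L_x^\infty$. Both are legitimate; yours is arguably cleaner because the exponent $(2n-1)\alpha$ does not pick up a factor of $p$, whereas the paper's exponent $12\kappa k p$ does. Two small points to make explicit in your write-up: (i) the pathwise closure requires knowing $\Theta_\eps<\infty$ a priori, which follows from the local existence Theorem~\ref{thm:local_existence} together with a standard continuation argument; (ii) for $t$ close to $1$ your shifted initial time $t-1$ is near zero and $\|u_\eps(t-1)\|_{L^q}$ carries the blow-up weight from \eqref{eq:global_u}, so it is cleanest to split at $t=2$ (use $v_\eps=Y_\eps$ directly on $[0,2]$, and the shift only for $t\ge 2$).
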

\begin{proof}
    By Duhamel's formula, we have
    \begin{equation*}
        \|\nabla Y_\eps(t)\|_{L^\infty} \lesssim \sum \int_{0}^{t} \Big\| \nabla e^{(t-r) \lL_\eps} \Big( \psi_\eps^{\diamond k_1}(r) \big( e^{r \lL_\eps} \psi_\eps(0) \big)^{k_2} \big( e^{r \lL_\eps} \phi_\eps(0) \big)^{k_3} Y_{\eps}^{k_4}(r) \Big)  \Big\|_{L^\infty} {\rm d} r\;,
    \end{equation*}
    where the sum is taken over all non-negative integers $k_1, \dots, k_4$ such that $k_1 + k_2 + k_3 + k_4 = 2n-1$. Using semi-group estimates, we can further bound it by
    \begin{equation*}
        \|\nabla Y_\eps(t)\|_{L^\infty} \lesssim \sum \|\psi_{\eps}(0)\|_{\cC^{-\kappa}}^{k_2} \|\phi_\eps(0)\|_{\cC^{-\beta}}^{k_3} \int_{0}^{t} f_{k_2, k_3}(t,r) \|\psi_{\eps}^{\diamond k_1}(r)\|_{\cC^{-\kappa}} \|Y_\eps(r)\|_{\cC^{2\kappa}}^{k_4} {\rm d}r\;,
    \end{equation*}
    where
    \begin{equation} \label{eq:f_kernel}
        f_{k_2, k_3}(t,r) = (t-r)^{-\frac{1+\kappa}{2}} r^{-\frac{3\kappa k_2}{2} - \frac{(2\kappa+\beta) k_3}{2}} e^{-c ((t-r) + (k_2 +k_3) r)}
    \end{equation}
    for some $c>0$. Taking $L_{\omega}^p$-norm on both sides above and using H\"{o}lder's inequality, we get
    \begin{equation*}
        \begin{split}
        \|\nabla Y_\eps(t)\|_{L_{\omega}^{p} L_{x}^{\infty}} \lesssim &  \sum \bigg[ \|\psi_{\eps}(0)\|_{L_{\omega}^{k_2 \theta} \cC_x^{-\kappa}}^{k_2} \|\phi_\eps(0)\|_{L_{\omega}^{k_3 \theta} \cC_{x}^{-\beta}}^{k_3}\\
        &\int_{0}^{t} f_{k_2, k_3}(t,r) \|\psi_{\eps}^{\diamond k_1}(r)\|_{L_{\omega}^{\theta} \cC_{x}^{-\kappa}} \|Y_\eps(r)\|_{L_{\omega}^{k_4 \theta} \cC_{x}^{2\kappa}}^{k_4} {\rm d}r \bigg]\;,
        \end{split}
    \end{equation*}
    where $\theta = 4p$, and we recall the sum is taken over non-negative integers $k_1, \dots, k_4$ that sum up to $2n-1$. Since $\|f_{k_2, k_3}(t,\cdot)\|_{L^{1}_{[0,t]}}$ is bounded by a universal constant independent of $t$, taking supremum over $t \in [0,T]$ on both sides, and noting that $\psi_{\eps}^{\diamond k_1}$ is stationary and the pure stochastic terms and initial data have arbitrarily high moments, we get
    \begin{equation*}
        \|\nabla Y_\eps\|_{L_{T}^{\infty} L_{\omega}^{p} L_{x}^{\infty}} \lesssim \sum_{k=0}^{2n-1} \|Y_{\eps}\|_{L_{T}^{\infty} L_{\omega}^{k \theta} \cC_{x}^{2\kappa}}^{k} \lesssim 1 + \sum_{k=1}^{2n-1} \|Y_{\eps}\|_{L_{T}^{\infty} L_{\omega}^{k \theta} \cC_{x}^{2\kappa}}^{k}\;, 
    \end{equation*}
    where the proportionality constant is independent of $T$. Hence, the above bound can be replaced by taking supremum over $t \in \R^+$. By Sobolev embedding in dimension two and Proposition~\ref{prop:fractional_GN}, we have
    \begin{equation*}
        \|Y_\eps(t)\|_{\cC^{2\kappa}} \lesssim \|Y_\eps(t)\|_{W^{3\kappa, q}} \lesssim \|\nabla Y_\eps(t)\|_{L^{q}}^{3\kappa} \; \|Y_\eps(t)\|_{L^{q}}^{1-3\kappa}\;,
    \end{equation*}
    where $q < +\infty$ depends on $\kappa$. Note that we do not have the extra $L^q$ term here since $Y_\eps(t) = 0$ on $\d \dD$. Plugging this back into the above term and using H\"older's inequality, we get
    \begin{equation*}
        \|\nabla Y_\eps\|_{L_{t}^{\infty} L_{\omega}^{p} L_{x}^{\infty}} \lesssim 1 + \sum_{k=1}^{2n-1} \|\nabla Y_\eps\|_{L_{t}^{\infty} L_{\omega}^{p} L_{x}^{q}}^{12 \kappa k p} \|Y_\eps\|_{L_{t}^{\infty} L_{\omega}^{\gamma} L_{x}^{q}}^{\gamma'} \lesssim 1 + \sum_{k=1}^{2n-1} \|\nabla Y_\eps\|_{L_{t}^{\infty} L_{\omega}^{p} L_{x}^{q}}^{12 \kappa k p}\;,
    \end{equation*}
    where in the last inequality we have used the $L_{t}^{\infty} L_{\omega}^{q} L_{x}^{q}$ bound for $Y_\eps$ in Lemma~\ref{le: global_energy_estimate}, and $\gamma$ and $\gamma'$ in the middle term are some positive exponents depending on $k$, $\kappa$ and $p$, and $L_{t}^{\infty}$ is taken over $t \in \R^+$. The conclusion then follows since $\kappa$ can be arbitrarily small and hence the exponent $12 \kappa k p$ is less than $1$. 
\end{proof}

\section{Convergence}
\label{sec:convergence}

In this section, we prove convergence of the remainder $u_\eps = \phi_\eps - \psi_\eps$ to its homogenisation limit $u_0$. This, when combined with the convergence of the linear part $\psi_\eps$, implies convergence of $\phi_\eps$ to its homogenisation limit $\phi_0$\footnote{Since the solution theory to the equation \eqref{eq:phi_limit} for $\phi_0 = \psi_0 + u_0$ is by now standard, we will not explain it in details here, but refer the readers to \cite{DaPratoDebussche2003} for more details.}. 

Recall the remainder $u_\eps$ has the form
\begin{equation*}
    u_\eps(t) = e^{t \lL_\eps} \big( \phi_\eps(0) - \psi_\eps(0) \big) + Y_\eps(t)\;,
\end{equation*}
where $Y_\eps$ solves the equation
\begin{equation*}
    \d_t Y_\eps = \lL_\eps Y_\eps - \big( \psi_\eps + e^{t \lL_\eps} (\phi_\eps(0) - \psi_\eps(0)) + Y_\eps \big)^{\diamond (2n-1)}\;, \quad Y_\eps(0) = 0\;.
\end{equation*}
Let $u(0) = \phi(0) - \psi(0)$ and $u(t) = e^{t \lL_0} u(0)$. Let $Y_0$ be the solution to the equation for $Y_\eps$ with $\lL_\eps$ and $\psi_\eps$ replaced by their homogenised limits $\lL_0$ and $\psi_0$. 

For convergence of $u_\eps$ to $u_0$, we split it by
\begin{equation*}
    \|u_\eps(t) - u(t)\|_{\cC^{1-\kappa}} \leq \| (e^{t \lL_\eps} - e^{t \lL_0}) u_\eps(0) \|_{\cC^{1-\kappa}} + \|e^{t \lL_0} (u_\eps(0) - u(0))\|_{\cC^{1-\kappa}}\;.
\end{equation*}
By assumption on convergence of $\phi_\eps(0)$ to $\phi_0(0)$ and Theorem~\ref{thm:linear_stationary}, we have
\begin{equation*}
    \E \Big(\sup_{t \in [0,T]} t^{\frac{1+\beta+\kappa}{2}} \|e^{t \lL_0} (u_\eps(0) - u_0(0))\|_{\cC^{1-\kappa}} \Big)^p \rightarrow 0\;.
\end{equation*}
For the first term, interpolating \eqref{eq:semi_group_derivative} and the bound for $e^{t \lL_\eps} - e^{t \lL_0}$, we also have
\begin{equation*}
    \E \Big(\sup_{t \in [0,T]} t^{\frac{1+\beta+\kappa}{2}} \|(e^{t \lL_\eps} - e^{t \lL_0}) u_\eps(0)\|_{\cC^{1-\kappa}} \Big)^p \rightarrow 0\;.
\end{equation*}
This shows that
\begin{equation*}
    \E \Big(\sup_{t \in [0,T]} t^{\frac{1+\beta+\kappa}{2}} \|u_\eps(t) - u_0(t)\|_{\cC^{1-\kappa}} \Big)^p \rightarrow 0\;. 
\end{equation*}
It remains to consider $Y_\eps$. We have the following theorem. 
\begin{thm}
    For every $T>0$, every $s \in (0,1)$ and every $p \geq 1$, we have
    \begin{equation*}
        \E \|Y_{\eps} - Y_0\|_{L^{\infty}([0,T]; W^{s,p})}^{p} \rightarrow 0
    \end{equation*}
    as $\eps \rightarrow 0$. 
\end{thm}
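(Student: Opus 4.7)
The plan is to subtract the Duhamel representations for $Y_\eps$ and $Y_0$, split the resulting integrand into three natural error channels, show that two of them produce $L^p(\Omega)$-vanishing contributions using the results already established in Sections~\ref{sec:linear} and~\ref{sec:remainder} and the appendix, and close the third by a (singular) Gronwall argument.

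Write $g_\eps^{(k)}(r) := \bigl(\psi_\eps(r) + e^{r\lL_\eps}(\phi_\eps(0)-\psi_\eps(0))\bigr)^{\diamond(2n-1-k)}$ and analogously $g_0^{(k)}$. Subtracting the Duhamel formula \eqref{eq:remainder_Duhamel} for $Y_0$ from the one for $Y_\eps$, each integrand splits as the sum
\begin{align*}
A_\eps^{(k)}(t,r) &:= (e^{(t-r)\lL_\eps} - e^{(t-r)\lL_0})\bigl(g_\eps^{(k)}(r)\,Y_\eps^k(r)\bigr),\\
B_\eps^{(k)}(t,r) &:= e^{(t-r)\lL_0}\bigl((g_\eps^{(k)}(r) - g_0^{(k)}(r))\,Y_\eps^k(r)\bigr),\\
C_\eps^{(k)}(t,r) &:= e^{(t-r)\lL_0}\bigl(g_0^{(k)}(r)\,(Y_\eps^k(r) - Y_0^k(r))\bigr).
\end{align*}
For $B_\eps^{(k)}$, the key input is that $g_\eps^{(k)} - g_0^{(k)} \to 0$ in $L^p(\Omega; \cC([0,T]; \cC^{-\kappa}(\dD)))$: this follows by expanding the Wick power binomially, applying Theorem~\ref{thm:linear_stationary} to each pure Wick factor in $\psi_\eps$, and using the semigroup-convergence bound on the (now smooth) initial-data factor. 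For $A_\eps^{(k)}$, the $L^2$-type semigroup-difference estimate gives an $\eps^{\eta}$ factor for some $\eta>0$, which after interpolation with \eqref{eq:semi_group_derivative} and pairing with the uniform $W^{1,+\infty}$-bound on $Y_\eps$ from Lemma~\ref{le:global_bound_derivative} and the Wick-moment bounds of Theorem~\ref{thm:linear_stationary}, again produces an $o_\eps(1)$ contribution to $\E\|Y_\eps - Y_0\|_{L^\infty_t W^{s,p}_x}^p$.

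For the recursive term $C_\eps^{(k)}$, I would use the factorisation
\begin{equation*}
Y_\eps^k(r) - Y_0^k(r) = \bigl(Y_\eps(r) - Y_0(r)\bigr)\sum_{j=0}^{k-1} Y_\eps^{k-1-j}(r)\,Y_0^j(r),
\end{equation*}
and H\"older together with the uniform-in-$\eps$ $W^{1,+\infty}$-bounds (with arbitrary moments) of Lemma~\ref{le:global_bound_derivative} for both $Y_\eps$ and $Y_0$, reducing to an integral inequality of the form
\begin{equation*}
\|Y_\eps(t) - Y_0(t)\|_{W^{s,p}} \lesssim o_\eps(1) + \int_0^t (t-r)^{-\alpha}\,K_\eps(r)\,\|Y_\eps(r) - Y_0(r)\|_{W^{s,p}}\,dr,
\end{equation*}
where $K_\eps$ is a non-negative stochastic weight with uniform-in-$\eps$ moments of all orders, and $\alpha<1$ comes from the derivative heat-kernel bound. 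A singular Gronwall lemma (iterated finitely many times to remove the $(t-r)^{-\alpha}$ singularity, followed by standard Gronwall) then closes the estimate in $L^\infty_t W^{s,p}_x$ after taking $p$-th moment.

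The main obstacle is to fit the three error channels into a single functional-analytic framework: the products $g_\eps^{(k)}\cdot(\text{polynomial in } Y_\eps, Y_0)$ must be well-defined Besov distributions of controlled negative regularity, the heat-kernel singularity must stay integrable in $r$, and the Gronwall step must close directly in $L^\infty_t W^{s,p}_x$ for every $s\in(0,1)$ and $p\geq 1$. A combination of Sobolev embedding in dimension two and fractional Gagliardo-Nirenberg (Proposition~\ref{prop:fractional_GN}), together with the $W^{1,+\infty}$-bound of Lemma~\ref{le:global_bound_derivative}, should make it possible to first prove convergence of $Y_\eps - Y_0$ in an auxiliary H\"older-type norm and then interpolate to the claimed $W^{s,p}$-norm, with the error rate made arbitrarily strong thanks to the freedom in $\kappa$.
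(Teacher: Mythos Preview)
Your direct subtraction-and-Gronwall route is genuinely different from the paper's. The paper argues by compactness: (i) along a subsequence, use uniform pathwise bounds on $\|Y_{\eps_\ell}\|_{L^\infty_T W^{1,\infty}_x}$ together with a bound on $\|\d_t Y_{\eps_\ell}\|_{L^4_T W^{-1,\infty}_x}$ to extract an almost-sure limit $\bar Y_0$ in $L^\infty([0,T];L^p)$ via an Arzel\`a--Ascoli argument; (ii) identify $\bar Y_0$ with $Y_0$ by testing the Duhamel formula against smooth $F$, pushing $e^{(t-r)\lL_{\eps_\ell}}$ onto the adjoint side where classical parabolic homogenisation applies to $e^{(t-r)\lL_{\eps_\ell}^*}F$, and then invoking uniqueness of the homogenised mild equation; (iii) upgrade to $L^p_\omega L^\infty_T W^{s,p}_x$ by interpolating the almost-sure $L^\infty_T L^p_x$ convergence against the uniform $W^{1,\infty}$ moment bounds of Lemma~\ref{le:global_bound_derivative}. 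No Gronwall appears, and only polynomial moments of the stochastic inputs are ever used.

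Your decomposition into the channels $A_\eps^{(k)},B_\eps^{(k)},C_\eps^{(k)}$ is natural, and the treatment of $A$ and $B$ is consistent with the available estimates. The gap is the line ``singular Gronwall \dots\ then closes the estimate \dots\ after taking $p$-th moment''. Pathwise Gronwall on your integral inequality produces a multiplicative factor of the form $\exp\bigl(C\|K_\eps\|_{L^\gamma_{[0,T]}}\bigr)$ in front of the $o_\eps(1)$, and taking $L^p_\omega$ of the product then requires \emph{exponential} moments of $K_\eps$. But $K_\eps$ contains $\|g_0^{(k)}\|_{\cC^{-\kappa}}$, hence Wick powers $\psi_0^{\diamond m}$ of order up to $2n-1$, and an $m$-th chaos variable with $m\ge 3$ has tails $\exp(-cx^{2/m})$ and therefore no exponential moment; $K_\eps$ also contains powers of $\|Y_\eps\|_{\cC^{2\kappa}}$, for which Lemma~\ref{le:global_bound_derivative} supplies only polynomial moments. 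If instead you take $L^p_\omega$ \emph{before} Gronwall via Minkowski, the H\"older step $\|K_\eps Z_\eps\|_{L^p_\omega}\le\|K_\eps\|_{L^{pq'}_\omega}\|Z_\eps\|_{L^{pq}_\omega}$ with $q>1$ forces the loop from $L^p_\omega$ into $L^{pq}_\omega$ and it never closes. A workable repair is to run pathwise Gronwall to obtain $\sup_t\|Y_\eps-Y_0\|\to 0$ merely in probability (using tightness of $K_\eps$ and convergence in probability of the $A,B$ contributions), and only then upgrade to $L^p_\omega$ via uniform higher moments --- but that last upgrade is exactly the paper's Step~3, and the intermediate probability argument plays the role of the paper's compactness Steps~1--2.
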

\begin{proof}
    The proof consists of three steps. We first use compactness to obtain almost sure convergence of subsequences of $\{Y_\eps\}$ in $L^{\infty}([0,T]; L^p)$. We then show that any subsequential limit must satisfy the same homogenised equation, which has a unique solution. This proves the almost sure convergence of the whole sequence $Y_\eps$ in $L^{\infty}([0,T]; L^p)$. Finally, we use the uniform $W^{1,p}$ bounds in the previous section to enhance it to convergence in $L_{\omega}^{p} L_{t}^{\infty} W^{s,p}$ for arbitrary $s<1$ and $p \geq 1$. 

    \begin{flushleft}
    \textit{Step 1.}
    \end{flushleft}
    Let $\{\eps_{\ell}\}$ be an arbitrary sequence with $\eps_{\ell}\to 0$, and $p \geq 1$ be arbitrary. There exists $\Omega' \subset \Omega$ with full measure and a further subsequence, still denoted by $\{\eps_\ell\}$ such that
    \begin{equation*}
        \sum_{k=0}^{2n-1} \|\psi_{\eps_\ell}^{\diamond k} - \psi_{\eps_{\ell+1}}^{\diamond k}\|_{L_{\omega}^{p} L_{[0,T]}^{\infty} \cC^{-\kappa}} \leq 2^{-\ell}\;.
    \end{equation*}
    for every $\ell \geq 0$. Hence, we have
    \begin{equation*}
        F := \sup_{\ell \geq 0} \sum_{k=0}^{2n-1} \sup_{t \in [0,T]} \|\psi_{\eps_\ell}^{\diamond k}(t)\|_{\cC^{-\kappa}} < +\infty
    \end{equation*}
    almost surely. Furthermore, one has $\|F\|_{L_{\omega}^{p}} < +\infty$. 

    Same as in the proof of Lemma~\ref{le:global_bound_derivative}, by Duhamel formula and properties of the semi-group $\nabla e^{(t-r) \lL_\eps}$, we have
    \begin{equation*}
        \|\nabla Y_\eps(t)\|_{L^\infty} \lesssim \sum \|\psi_{\eps}(0)\|_{\cC^{-\kappa}}^{k_2} \|\phi_\eps(0)\|_{\cC^{-\beta}}^{k_3} \int_{0}^{t} f_{k_2, k_3}(t,r) \|\psi_{\eps}^{\diamond k_1}(r)\|_{\cC^{-\kappa}} \|Y_\eps(r)\|_{\cC^{2\kappa}}^{k_4} {\rm d}r\;,
    \end{equation*}
    where the sum is taken over $k_1 + \cdots + k_4 = 2n-1$ and the kernel $f_{k_2, k_3}$ is given by \eqref{eq:f_kernel}. Taking $L_{t}^{\infty}([0,T])$ on both sides and using integrability of the kernel $f(t,\cdot)$, we get
    \begin{equation*}
        \|\nabla Y_\eps\|_{L_{T}^{\infty} L_{x}^{\infty}} \lesssim \sum \|\psi_\eps(0)\|_{\cC^{-\kappa}}^{k_2} \|\phi_\eps(0)\|_{\cC^{-\beta}}^{k_3} \|\psi_{\eps}^{\diamond k_1}\|_{L_{T}^{\infty} \cC_{x}^{-\kappa}} \|Y_\eps\|_{L_{T}^{\infty} \cC_{x}^{2\kappa}}^{k_4}\;.
    \end{equation*}
    Again, by Sobolev embedding in dimension two and Proposition~\ref{prop:fractional_GN}, we have
    \begin{equation*}
        \|Y_{\eps}(t)\|_{\cC^{2\kappa}} \lesssim \|Y_{\eps}(t)\|_{W^{3\kappa, q}} \lesssim \|\nabla Y_{\eps} ( t )\|_{L^{q}}^{3\kappa} \; \|Y_{ \eps}(t)\|_{L^{q}}^{1-3\kappa}\;,
    \end{equation*}
    where $q < +\infty$ depends on $\kappa$. Plugging this back into the above term, then using H\"older's inequality as well as the almost sure finiteness of $\|\psi_{\eps_\ell}(0)\|_{\cC^{-\kappa}}$ and $\|\psi_{\eps_\ell}^{\diamond k_1}\|_{L_{T}^{\infty} \cC_{x}^{-\kappa}}$ uniformly along the subsequence $\{\eps_\ell\}$, we get
    \begin{equation*}
        \underset{t\in [0,T]}{\sup}\|\nabla Y_{\eps_{\ell}}(t)\|_{L_{x}^{\infty}} \lesssim_{\omega, T} 1 + \sum_{k=1}^{2n-1} \|\nabla Y_{\eps_{\ell}}\|_{L_{T}^{\infty}  L_{x}^{q}}^{12 \kappa k p} \|Y_{\eps_{\ell}}\|_{L_{T}^{\infty}  L_{x}^{q}}^{\gamma'} \lesssim_{\omega, T} 1 + \sum_{k=1}^{2n-1} \|\nabla Y_{\eps_{\ell}}\|_{L_{T}^{\infty}  L_{x}^{q}}^{12 \kappa k p}\;,
    \end{equation*}
    where the proportionality constant is now random but almost surely finite and independent of $\eps_\ell$. This then implies
    \begin{equation}\label{eq: uniform estimate for W 1 infty norm}
    \underset{\ell \in \NN}{\sup}\underset{t\in[0,T]}{\sup}|| Y_{\eps_{\ell}}(t)||_{W_0^{1,\infty}(\dD)}<\infty
    \end{equation}
    almost surely. 

    On the other hand, since
    \begin{equation*}
    \partial_{t} Y_{\eps_{\ell}}=\lL_{\eps_{\ell}}Y_{\eps_{\ell}} - \big(\psi_{\eps_{\ell}} + e^{t \lL_{\eps_{\ell}} } u_{\eps_{\ell}}(0) + Y_{\eps_{\ell}} \big)^{\diamond  (2n - 1)}\;,
    \end{equation*}
    the previous bounds on $Y_{\eps_\ell}$ then implies
    \begin{equation} \label{eq: uniform estimate for W -1 infty norm}
    \underset{\ell \in \NN}{\sup} \| \partial_t Y_{\eps_{m_l}} \|_{L^{4}(0,T; W^{-1,\infty}(\dD))}(\omega) < \infty 
    \end{equation}
    almost surely. Combining \eqref{eq: uniform estimate for W 1 infty norm}, \eqref{eq: uniform estimate for W -1 infty norm}, and applying Arzela-Ascoli theorem, we conclude that there exists a further subsequence (which we still denoted by $\{Y_{\eps_\ell}\}$) that that converges almost surely to some $\bar{Y_0}$, in $L^{\infty}([0,T]; L^p)$ for every $p \geq 1$. 

    \begin{flushleft}
    \textit{Step 2.}
    \end{flushleft}
    
    We now show that the almost sure subsequential limit $\bar{Y_0}$ must satisfy the mild form equation
    \begin{equation} \label{eq:homogenised_limit_remainder}
        \bar{Y_0}(t) = -\int_{0}^{t} e^{(t-r) \lL_0} \big( \psi_0(r) + e^{r \lL_0}(\phi_0(0) - \psi_0(0)) + \bar{Y}_0(r) \big)^{\diamond (2n-1)} {\rm d}r\;,
    \end{equation}
    and then the convergence of the whole sequence to this limit follows from uniqueness of the solution to this equation. For convenience, we write
    \begin{equation*}
        R_{\eps}(r) = \big( \psi_\eps(r) + e^{r \lL_\eps}(\phi_\eps(0) - \psi_\eps(0)) + Y_\eps(r) \big)^{\diamond (2n-1)}\;,
    \end{equation*}
    and $\bar{R_0}(r)$ by setting the corresponding $\eps$ to $0$ and $Y_\eps$ to $\bar{Y_0}$. Note that $Y_\eps$ satisfies the mild form equation
    \begin{equation} \label{eq:Duhamel_remainder}
        Y_\eps(t) = - \int_{0}^{t} e^{(t-r) \lL_\eps} R_\eps(r) {\rm d}r\;.
    \end{equation}
    Let $\{Y_{\eps_\ell}\}$ be a subsequence that converges almost surely to $\bar{Y_0}$ above. In order to show $\bar{Y_0}$ satisfies \eqref{eq:homogenised_limit_remainder}, it suffices to show that the right hand side of \eqref{eq:Duhamel_remainder} converges to that of \eqref{eq:homogenised_limit_remainder} weakly (as space-time distributions) along the subsequence $\{\eps_\ell\}$; that is, for every $F \in C_{c}^{\infty}([0,T] \times \dD)$, one has
    \begin{equation} \label{eq:homogenisation_test}
        \int_{0}^{T} \Bigscal{\int_{0}^{t} e^{(t-r) \lL_{\eps_\ell}} R_{\eps_\ell}(r) {\rm d}r\;, F(t)} \, {\rm d}t \rightarrow \int_{0}^{T} \Bigscal{\int_{0}^{t} e^{(t-r) \lL_0} \bar{R_0}(r) {\rm d}r, F(t)} \, {\rm d}t\;
    \end{equation}
    as $\eps \rightarrow 0$. Here, $\scal{\cdot, \cdot}$ represents integration with respect to $x \in \dD$. For the left hand side above, changing the order of integration and then using duality, we get
    \begin{equation*}
        \int_{0}^{T} \Bigscal{\int_{0}^{t} e^{(t-r) \lL_{\eps_\ell}} R_{\eps_\ell}(r) {\rm d}r\;, F(t)} \, {\rm d}t = \int_{0}^{T} \Bigscal{R_{\eps_\ell}(r), \int_{r}^{T} e^{(t-r)\lL_{\eps_\ell}^*} F(t) {\rm d}t } \, {\rm d}r\;.
    \end{equation*}
    Since $F \in C_{c}^{\infty}([0,T] \times D)$, by standard parabolic homogenisation results (\cite{GengShen2017, Geng2020}), we have
    \begin{equation*}
        \Big\| \int_{r}^{T} \big( e^{(t-r) \lL_{\eps_\ell}^*} - e^{(t-r) \lL_0^*} \big) F(t) {\rm d}t \Big\|_{L^{\infty}([0,T]; \cC^{\gamma})} \rightarrow 0
    \end{equation*}
    for $\gamma \in (0,1)$. On the other hand, since $Y_\eps \rightarrow \bar{Y_0}$ in $L^{\infty}([0,T], L^p)$ almost surely, we have
    \begin{equation*}
        \sup_{r \in [0,T]} r^{\frac{1}{5}} \|R_\eps(r) - \bar{R_0}(r)\|_{\cC^{-\kappa}} \rightarrow 0
    \end{equation*}
    as $\eps \rightarrow 0$. This shows the convergence in \eqref{eq:homogenisation_test}, and hence the almost sure convergence of the whole sequence $Y_\eps$ to $Y_0$ in $L^{\infty}([0,T]; L^p)$. 

    \begin{flushleft}
    \textit{Step 3.}
    \end{flushleft}
    
    To show the convergence also holds in $L_{\omega}^{p} L_{t}^{\infty} W^{s,p}$ for arbitrary $s \in (0,1)$ and $p \geq 1$, it suffices to note that we have the uniform bound
    \begin{equation*}
        \sup_{\eps \in (0,1)} \|Y_\eps\|_{L_{\omega}^{q} L_{t}^{\infty} W_{x}^{1,\infty}} < +\infty
    \end{equation*}
    for arbitrary $q \geq 1$. In particular, together with almost sure convergence in $L_{t}^{\infty} L_{x}^{p}$, it implies that
    \begin{equation*}
        \E \|Y_\eps - Y_0\|_{L_{t}^{\infty} L_{x}^{p}}^{q'} \rightarrow 0
    \end{equation*}
    for every $q' < q$. The time interval is fixed to be $[0,T]$. Since we also have the interpolation
    \begin{equation*}
        \|Y_{\eps}(t) - Y_0(t)\|_{W^{s,p}} \lesssim \|Y_\eps(t) - Y_0(t)\|_{L^p}^{\theta} \|Y_\eps(t) - Y_0(t)\|_{W^{1,p}}^{1-\theta}\;
    \end{equation*}
    for some $\theta \in (0,1)$ depending on $s$ and $p$, we get convergence in $L_{\omega}^{q'} L_{t}^{\infty} W_{x}^{s,p}$. Note that $q' < q$ and $p$ are both arbitrary. This completes the proof of the theorem. 
\end{proof}

\appendix

\section{Green function and semi-group estimates}

We cite some useful Green function estimates from \cite{Geng2023GaussianBA,Geng2020} and bounds on semi-groups from \cite{Suslina2016}. 

Let $a: \R^d \rightarrow \R^{d \times d}$ be a coefficient matrix satisfying Assumption~\ref{as:a} (in general dimension $d$). Let $G_\eps(t;x,y)$ be the Green function for $\d_t - \lL_\eps$ in the sense that
\begin{equation*}
\begin{split}
\left\{
\begin{array}{rll}
        \d_t G_\eps (\cdot \,; \, \cdot,  y) &= \lL_\eps G_\eps (\cdot \,; \, \cdot, y) \quad  & \text{on} \; \; (0,+\infty)\times \mathcal{D}\;\\
             G_\eps (\cdot \,; \, \cdot, y) &= 0 \quad &\text{on} \; \; (0,+\infty) \times \partial\mathcal{D}\;\\
        \underset{t \downarrow 0^{+}}{\lim} G_\eps(t; \cdot, y) &= \delta_y (\cdot)   & 
\end{array}
\right.
\end{split}
\end{equation*}
By \cite[Theorem~2.2]{Geng2020},
\begin{equation} \label{eq:green_pointwise}
\begin{split}
    |G_\eps(t; x,y)| \lesssim \frac{1}{\big( \sqrt{t} + |x-y| \big)^{d}}\;.
\end{split}
\end{equation}
By \cite[Theorem~2.2]{Geng2020} and \cite[Theorem~1.2]{GengShen2015},
\begin{equation} \label{eq:green_gradient_pointwise}
\begin{split}
    |\nabla_x G_{\eps}(t;x,y)| + |\nabla_y G_{\eps}(t;x,y)| \lesssim \frac{1}{\big( \sqrt{t} + |x-y| \big)^{d+1}}\;
\end{split}
\end{equation}
and
\begin{equation} \label{eq:green_gradient_gradient_pointwise}
\begin{split}
    |\nabla_x \nabla_y G_{\eps}(t;x,y)| \lesssim \frac{1}{\big( \sqrt{t} + |x-y| \big)^{d+2}}\;.
\end{split}
\end{equation}
uniformly over all $t, x, y$ such that $|t| < \diam(\dD)$. By \cite[Theorem~1.1]{Geng2023GaussianBA}, 
\begin{align} \label{eq:green_convergence}
    |G_{\varepsilon}(t;x,y)-G_{0}(t;x,y)|\lesssim \frac{\varepsilon^\frac{1}{2}}{(|x-y|+\sqrt{|t|})^{d+1}}.
\end{align}

Let $e^{t\mathcal{L}_{\eps}}$ be the semi-group generated by a linear closed, and densely defined operator $\lL_{\eps}$ on $L^{2}(\dD)$. Let $e^{t\mathcal{L}_{0}}$ be the semi-group generated by a linear closed, and densely defined operator $\lL_{0}$ on $L^{2}(\dD)$. By \cite[Lemma~4.1]{Suslina2016},
\begin{equation}\label{eq:semi_group_operator_bound}
\|e^{t\mathcal{L}_{\eps}} \|_{L^{2}( \dD ) \to L^2( \dD ) } \lesssim e^{-Ct}\;, \quad \|e^{t\mathcal{L}_{\eps}}\|_{L^{2}( \dD ) \to H^1( \dD ) } \lesssim t^{-1/2}e^{-Ct}\;.
\end{equation}
By \cite[Theorem~4.2]{Suslina2016},
\begin{equation} \label{eq:semi_group_difference}
\begin{split}
\|e^{t\mathcal{L}_{\eps}}-e^{t\mathcal{L}_{0}}\|_{L^{2}( \dD ) \to L^2( \dD ) }\lesssim  \frac{\eps}{(t+\eps^2)^{1/2}}e^{-Ct}.
\end{split}
\end{equation}
 By \eqref{eq:green_pointwise}, \eqref{eq:green_convergence} and \eqref{eq:semi_group_operator_bound}, $\forall\alpha,\beta\in (0,1)$, $\forall\kappa\in(0,\frac{1}{10})$,
\begin{equation} \label{eq:semi_group_derivative}
    \|e^{t \lL_\eps} - e^{t \lL_0}\|_{\cC^{\alpha}(\dD)} \lesssim_{\alpha,\beta,\kappa} \varepsilon^{\kappa}t^{-\frac{\alpha+\beta+5\kappa}{2}} e^{-C t} \|f\|_{\cC^{-\beta}(\dD)}\;.
\end{equation}

\section{Some functional inequalities}

We fix the bounded domain $\dD \subset \R^d$ and $\alpha \in (0,1)$ throughout this section. All proportionality constants depend on $\dD$, the value of $\alpha$ and any differentiability or integrability exponents in the statements. 

\begin{prop} \label{prop:B11_bound}
    $\forall\alpha\in(0,1)$, $\forall f\in B^{\alpha}_{1,1}$,
    \begin{equation*}
        \|f\|_{B_{1,1}^{\alpha}} \lesssim_{\alpha}\|\nabla f\|_{L^1}^{\alpha} \|f\|_{L^1}^{1-\alpha}+\|f\|_{L^{1}}\;.
    \end{equation*}
\end{prop}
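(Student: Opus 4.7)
The plan is to use the Littlewood--Paley characterisation of $B^{\alpha}_{1,1}$ and balance two elementary bounds on the dyadic blocks. Write
\begin{equation*}
    \|f\|_{B^{\alpha}_{1,1}} = \sum_{j \geq -1} 2^{j\alpha} \|\Delta_j f\|_{L^1},
\end{equation*}
where $\{\Delta_j\}_{j \geq -1}$ is an inhomogeneous dyadic decomposition. The $j = -1$ block is bounded by $\|f\|_{L^1}$ by Young's inequality, which will produce the $+\|f\|_{L^1}$ term on the right-hand side of the claimed inequality.

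For $j \geq 0$ I intend to use two uniform estimates, namely
\begin{equation*}
    \|\Delta_j f\|_{L^1} \lesssim \|f\|_{L^1}, \qquad \|\Delta_j f\|_{L^1} \lesssim 2^{-j}\|\nabla f\|_{L^1}.
\end{equation*}
The first follows because the convolution kernel of $\Delta_j$ has $L^1$-norm uniformly bounded in $j$. The second is a standard consequence of the fact that the kernel of $\Delta_j$ has vanishing mean, so one can write it as the divergence of a scaled bump whose $L^1$-norm is $\lesssim 2^{-j}$, and then move one derivative onto $f$. Splitting the sum at a cut-off $J \geq 0$ and summing geometrically in $j$ (using $\alpha \in (0,1)$ so that both series converge) gives
\begin{equation*}
    \sum_{0 \leq j \leq J} 2^{j\alpha}\|\Delta_j f\|_{L^1} \lesssim 2^{J\alpha}\|f\|_{L^1}, \qquad \sum_{j > J} 2^{j\alpha}\|\Delta_j f\|_{L^1} \lesssim 2^{-J(1-\alpha)}\|\nabla f\|_{L^1}.
\end{equation*}
Choosing $J$ to be the nearest non-negative integer to $\log_2(\|\nabla f\|_{L^1}/\|f\|_{L^1})$ balances the two terms and yields $\|\nabla f\|_{L^1}^{\alpha}\|f\|_{L^1}^{1-\alpha}$. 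If $\|\nabla f\|_{L^1} \leq \|f\|_{L^1}$, then $J=0$ is forced, and both sums are then dominated by $\|f\|_{L^1}$, absorbed by the explicit additive term.

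The only genuine subtlety is that we are on the bounded domain $\dD$ rather than on $\R^d$, where the Littlewood--Paley calculus is cleanest. This I would handle by applying the estimate to an extension $Ef \in W^{1,1}(\R^d)$ provided by a bounded linear extension operator (available since $\dD$ is smooth), which satisfies $\|Ef\|_{L^1(\R^d)} \lesssim \|f\|_{L^1(\dD)}$ and $\|\nabla Ef\|_{L^1(\R^d)} \lesssim \|\nabla f\|_{L^1(\dD)} + \|f\|_{L^1(\dD)}$, and then using that the Besov norm on $\dD$ is dominated by that on $\R^d$ under restriction. The extra $\|f\|_{L^1}$ term from the extension constant is harmlessly absorbed into the right-hand side. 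Since there are no singular stochastic objects, no regularity threshold issues, and $\alpha<1$ is strictly below the derivative endpoint, I do not expect any further obstacle; the main technical point is simply the balancing step in the dyadic sum.
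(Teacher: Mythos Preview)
Your argument is correct. The paper does not actually prove this statement: its entire proof is the one-line citation ``This is \cite[Proposition~8]{Weber2017} for $\mu=0$.'' What you have written is precisely the standard Littlewood--Paley proof of that cited result, so in substance you are supplying the argument the paper outsources. The two block estimates $\|\Delta_j f\|_{L^1}\lesssim\|f\|_{L^1}$ and $\|\Delta_j f\|_{L^1}\lesssim 2^{-j}\|\nabla f\|_{L^1}$ are correct for the reasons you give, the geometric summation and optimisation in $J$ are routine, and your handling of the bounded domain via a $W^{1,1}$ extension operator (available for smooth $\dD$ by Stein's construction) together with restriction is the right way to reduce to $\R^d$. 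There is no gap.
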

\begin{proof}
    This is \cite[Proposition~8]{Weber2017} for $\mu=0$. 
\end{proof}

\begin{prop} [Fractional Gagliardo-Nirenberg inequality]
\label{prop:fractional_GN}
     For every $q \in (1,+\infty)$, $\alpha\in(0,1)$, we have
    \begin{equation*}
        \|f\|_{W^{\alpha,q}} \lesssim_{\alpha,q} \|f\|_{W^{1,q}}^{\alpha} \|f\|_{L^q}^{1-\alpha}
    \end{equation*}
\end{prop}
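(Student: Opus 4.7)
The plan is to prove the inequality by reducing to the case $\dD = \R^d$ via a bounded linear extension operator, which exists since $\dD$ has smooth boundary: there is $E\colon W^{1,q}(\dD)\to W^{1,q}(\R^d)$ satisfying $\|Ef\|_{L^q(\R^d)}\lesssim\|f\|_{L^q(\dD)}$ and $\|Ef\|_{W^{1,q}(\R^d)}\lesssim\|f\|_{W^{1,q}(\dD)}$. Since $\|f\|_{W^{\alpha,q}(\dD)}\le\|Ef\|_{W^{\alpha,q}(\R^d)}$, it suffices to establish the estimate on $\R^d$ using the Sobolev--Slobodeckij representation
\begin{equation*}
\|f\|_{W^{\alpha,q}(\R^d)}^{q}=\|f\|_{L^{q}}^{q}+\iint_{\R^d\times\R^d}\frac{|f(x)-f(y)|^{q}}{|x-y|^{d+\alpha q}}\,dx\,dy.
\end{equation*}

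The key step is to estimate the Gagliardo seminorm $[f]_{W^{\alpha,q}}^q$ by splitting the domain of integration at a parameter $R>0$. On the far region $\{|x-y|>R\}$, using $|f(x)-f(y)|^{q}\le 2^{q-1}(|f(x)|^q+|f(y)|^q)$ together with Fubini's theorem and polar coordinates, one computes
\begin{equation*}
\iint_{|x-y|>R}\frac{|f(x)-f(y)|^{q}}{|x-y|^{d+\alpha q}}\,dx\,dy\;\lesssim\;\|f\|_{L^q}^{q}\int_{|h|>R}\frac{dh}{|h|^{d+\alpha q}}\;\lesssim\;R^{-\alpha q}\,\|f\|_{L^q}^{q}.
\end{equation*}
On the near region $\{|x-y|\le R\}$, one uses the fundamental theorem of calculus to write $f(x)-f(y)=\int_0^1\nabla f(y+t(x-y))\cdot(x-y)\,dt$. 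Applying Jensen's inequality to the $t$-integral, changing variables $h=x-y$ and $z=y+th$ inside the spatial integration, and using Fubini, yields
\begin{equation*}
\iint_{|x-y|\le R}\frac{|f(x)-f(y)|^{q}}{|x-y|^{d+\alpha q}}\,dx\,dy\;\lesssim\;\|\nabla f\|_{L^q}^{q}\int_{|h|\le R}\frac{dh}{|h|^{d+(\alpha-1)q}}\;\lesssim\;R^{(1-\alpha)q}\,\|\nabla f\|_{L^q}^{q}.
\end{equation*}

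Optimizing the sum $R^{-\alpha q}\|f\|_{L^q}^{q}+R^{(1-\alpha)q}\|\nabla f\|_{L^q}^{q}$ by choosing $R=\|f\|_{L^q}/\|\nabla f\|_{L^q}$ (and absorbing the degenerate cases by continuity) gives the homogeneous bound $[f]_{W^{\alpha,q}}\lesssim\|f\|_{L^q}^{1-\alpha}\|\nabla f\|_{L^q}^{\alpha}$. Combining with the trivial estimate $\|f\|_{L^q}\le\|f\|_{L^q}^{1-\alpha}\|f\|_{W^{1,q}}^{\alpha}$ and $\|\nabla f\|_{L^q}\le\|f\|_{W^{1,q}}$ then yields the desired inequality $\|f\|_{W^{\alpha,q}}\lesssim\|f\|_{L^q}^{1-\alpha}\|f\|_{W^{1,q}}^{\alpha}$.

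This is a classical calculation so no deep obstacle is expected; the main care is in choosing a definition of $W^{\alpha,q}(\dD)$ on the bounded domain consistent with the extension argument, and in ensuring that the proportionality constants depend only on $d$, $q$, $\alpha$ and $\dD$ (they do, via the operator norm of $E$ and the two elementary integrals above). Alternatively, one could invoke the identification $W^{\alpha,q}(\dD)=(L^q(\dD),W^{1,q}(\dD))_{\alpha,q}$ as a real interpolation space for smooth $\dD$, from which the claim is immediate from the general estimate $\|f\|_{(X_0,X_1)_{\theta,r}}\lesssim\|f\|_{X_0}^{1-\theta}\|f\|_{X_1}^{\theta}$, obtained directly from the $K$-functional; but the direct splitting argument is self-contained and exhibits the explicit optimization of $R$.
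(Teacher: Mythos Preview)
Your argument is correct. The paper's own proof is a one-line citation to \cite[Theorem~1]{fractional_GN_inequality} (Brezis--Mironescu), so your write-up is strictly more informative: you give a self-contained derivation via the Sobolev--Slobodeckij seminorm, splitting the double integral at scale $R$ and optimizing, together with a standard extension to $\R^d$. This is the classical elementary route and has the advantage of making the dependence of the constant on $\alpha$, $q$, $d$, and the domain (through the extension operator) explicit, at the cost of a short computation the paper simply outsources. The real-interpolation alternative you sketch at the end is closer in spirit to what the cited reference actually proves in greater generality; either route is fine here since only the special case $(L^q, W^{1,q})$ is needed.
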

\begin{proof}
    This is \cite[Theorem~1]{fractional_GN_inequality}. 
\end{proof}

\textsc{School of Mathematical Sciences, Peking University, 5 Yiheyuan Road, Haidian District, Beijing, 100871, China}. 
\\
Email: 2001110006@stu.pku.edu.cn
\\
\\
\textsc{Beijing International Center for Mathematical Research, Peking University, 5 Yiheyuan Road, Haidian District, Beijing, 100871, China}. 
\\
Email: weijunxu@bicmr.pku.edu.cn

\end{document}